\theoremstyle{plain}
\newtheorem{theorem}{Theorem}
\newtheorem{lemma}[theorem]{Lemma}
\newtheorem{proposition}[theorem]{Proposition}
\newtheorem{corollary}[theorem]{Corollary}
\newtheorem{notation}[theorem]{Notation}
\newcommand{\Char}{\textup{char}}
\renewcommand{\geq}{\geqslant}
\renewcommand{\leq}{\leqslant}
\newcommand{\F}{\mathbb{F}}
\begin{document}

\newcommand{\lcm}{\mathrm{lcm}}

\hyphenation{}

\title[`Jordan partitions' and their parts and \texorpdfstring{{\it
p}}{}-parts]{Decomposing modular tensor products:\\`Jordan
partitions', their parts and \texorpdfstring{{\it p}}{}-parts}
\author{S.\,P. Glasby, Cheryl E. Praeger, and Binzhou Xia}

\address[Glasby]{ Centre for Mathematics of Symmetry and Computation\\
University of Western Australia\\ 35 Stirling Highway\\ Crawley 6009,
Australia. Also affiliated with The Faculty of Mathematics and Technology,
University of Canberra, ACT 2601, Australia. Email:
{\tt GlasbyS@gmail.com; WWW:
\href{http://www.maths.uwa.edu.au/~glasby/}{http://www.maths.uwa.edu.au/$\sim$glasby/}}}
\address[Praeger] {Centre for Mathematics of Symmetry and
Computation\\ University of Western Australia\\ 35 Stirling Highway\\
Crawley 6009, Australia. Also affiliated with King Abdulaziz
University, Jeddah, Saudi Arabia. Email: {\tt
Cheryl.Praeger@uwa.edu.au;\newline WWW:
\href{http://www.maths.uwa.edu.au/~praeger}{http://www.maths.uwa.edu.au/$\sim$praeger}
}} \address[Xia]{ Centre for Mathematics of Symmetry and Computation\\
University of Western Australia\\ 35 Stirling Highway\\ Crawley 6009,
Australia. Current address: School of Mathematical Science, Peking
University, Beijing, People's Republic of China.\newline Email: {\tt
BinzhouXia@pku.edu.cn} }

\date{\today}

\begin{abstract} Determining the Jordan canonical form of the tensor
product of Jordan blocks has many applications including to the
representation theory of algebraic groups, and to tilting modules.
Although there are several algorithms for computing this decomposition
in literature, it is difficult to predict the output of these
algorithms. We call a decomposition of the form $J_r\otimes
J_s=J_{\lambda_1}\oplus\cdots\oplus J_{\lambda_b}$ a `Jordan
partition'. We prove several deep results concerning the $p$-parts of
the $\lambda_i$ where $p$ is the characteristic of the underlying
field. Our main results include the proof of two conjectures made by
McFall in 1980, and the proof that $\lcm(r,s)$ and
$\gcd(\lambda_1,\dots,\lambda_b)$ have equal $p$-parts. Finally, we
establish some explicit formulas for Jordan partitions when $p=2$.
\end{abstract}

\maketitle \centerline{\noindent AMS Subject Classification (2010):
15A69, 15A21, 13C05}

\section{Introduction}

Throughout this paper $F$ denotes a field with
characteristic~$p\geq0$. Given $\alpha\in F$ denote by $J_r(\alpha)$
the $r\times r$ Jordan block with eigenvalue $\alpha$. Hence $(\alpha
I-J_r(\alpha))^k=0$ holds if and only if $k\geq r$. Given
$\alpha,\beta\in F$ and $r,s\geq1$ the Jordan canonical form of the
tensor product $J_r(\alpha)\otimes J_s(\beta)$ equals
$J_{\lambda_1}(\alpha\beta)\oplus\cdots\oplus
J_{\lambda_b}(\alpha\beta)$ where $rs=\lambda_1+\cdots+\lambda_b$. The
partition $(\lambda_1,\dots,\lambda_b)$ of $rs$ is easily described
when $\alpha\beta=0$, see for example
\cite[Prop.\;2.1.2]{II2009}. When $\alpha\beta\neq0$, a simple change
of basis shows that the corresponding partition is the same as that
for $J_r(1)\otimes J_s(1)$. We denote it by $\lambda(r,s,p)$ as the
Jordan canonical form of $J_r(1)\otimes J_s(1)$ is invariant under
field extensions. We call $\lambda(r,s,p)=(\lambda_1,\dots,\lambda_b)$
a `Jordan partition' and always write its parts in non-increasing
order $\lambda_1\geq\dots\geq\lambda_b>0$. It has been long known that
$b$ equals $\min(r,s)$, see \cite[Lemma\;2.1]{Ralley1969}. Note that
$\lambda(r,s,p)=\lambda(s,r,p)$ since $J_r(1)\otimes J_s(1)$ is
similar to $J_s(1)\otimes J_r(1)$.

The partition $\lambda(r,s,p)$ is well known if $\Char(F)=0$, or
$\Char(F)=p\geq r+s-1$. In these cases, the $i$th part of
$\lambda(r,s,p)$ is $\lambda_i=r+s+1-2i$,
see~\cite[Corollary~1]{Srinivasan1964}. Henceforth, we will assume
that $\Char(F)=p$ is an arbitrary prime, possibly satisfying $p\geq
r+s-1$.  The {\it $p$-part} of a nonzero integer $n$, denoted by
$n_p$, is the largest $p$-power dividing $n$.

There is a well-known link between the partition $\lambda(r,s,p)$ and
the modular representation theory of a cyclic group $C_{p^n}$ of order
$p^n$ where $\max(r,s)\leq p^n$. There are precisely $p^n$ pairwise
nonisomorphic indecomposable $FC_{p^n}$-modules, say
$V_1,\dots,V_{p^n}$ where $\dim(V_i)=i$. In his pioneering
work~\cite{Green1962}, Green studied a ring, now called the
\emph{modular representation ring} or \emph{Green ring}, whose
elements are $F$-linear combinations $\sum_{i=1}^{p^n}\alpha_i[V_i]$
of the isomorphism classes $[V_i]$.  Addition and multiplication are
given by the direct sum and by tensor product, and denoted $\oplus$
and $\otimes$. It is conventional to write the module $V_i$ instead of
the isomorphism class $[V_i]$, and to let $V_0$ be a 0-dimensional
module. As usual $mV$ denotes the direct sum of $m$ copies of $V$
where $m\geq0$ is an integer. Thus $0V$ is just the zero module, and
$[V_0]=[0V]$. Given positive integers $r,s$ satisfying $r,s\leq p^n$,
the module $V_r\otimes V_s$ is a sum of indecomposable modules by the
Krull-Schmidt theorem. This gives a Green ring equation
\begin{equation}\label{eq7} V_r\otimes
V_s=V_{\lambda_1}\oplus\cdots\oplus V_{\lambda_b}
\quad\textup{where}\quad b:=\min(r,s),
\end{equation} where the parts $\lambda_i$ of the partition
$\lambda(r,s,p)=(\lambda_1,\dots,\lambda_b)$ are at most~$p^n$. It is
easy to convert between the Green ring decomposition (\ref{eq7}) and
the partition $\lambda(r,s,p)$, and we shall do so frequently in this
paper.

Given positive integers $r$ and $s$, let $p^n$ be the smallest
$p$-power exceeding $\max(r,s)$. A fundamental question is how to
decompose $V_r\otimes V_s$ as $V_{\lambda_1}\oplus\dots\oplus
V_{\lambda_b}$. In fact, a majority of papers addressing Jordan
partitions in the literature were concerned with this decomposition
problem, and there are basically two classes of algorithms. One class
of algorithms~\cite{McFall1979,Norman1995,Renaud1979} involves
recursive computations to reduce~$n$. Although these algorithms are
similar in spirit, the one proposed by Renaud~\cite{Renaud1979} in
1979 is more convenient to apply, and we use it repeatedly in
Section~\ref{sec1}. The other class of
algorithms~\cite{II2009,McFall1980,Ralley1969,Srinivasan1964} is
related to binomial matrices (matrices of binomial coefficients). Iima
and Iwamatsu~\cite{II2009} presented a novel algorithm which, unlike
it predecessors, avoided the computation of ranks of binomial matrices
over $\mathbb{F}_p$, called $p$-ranks. In 2009, Iima and
Iwamatsu~\cite{II2009} showed that, to compute the parts of
$\lambda(r,s,p)$, it suffices to know whether or not the $p$-ranks of
certain binomial matrices are full. This reduces the computation
dramatically since the determinants of those binomial matrices can be
computed via an explicit formula, and we can study their
$p$-divisibility using number theory. For complementary introductory
remarks, see~\cite[\S1]{GPX}.

There are, however, relatively few results on the properties of the decomposition, or the partition in the literature. The following one is due to Green~\cite{Green1962}, who assumed the $\lambda_i$ to be positive. It is convenient for us to assume that each part is nonnegative.

\begin{proposition}\label{prop1}\textup{\cite[(2.5a)]{Green1962}}
Suppose $1\leq r,s\leq p^n$. If $V_r\otimes V_s=V_{\lambda_1}\oplus\dots\oplus V_{\lambda_b}$, then
$$
V_{p^n-r}\otimes V_s=(s-b)V_{p^n}\oplus V_{p^n-\lambda_b}\oplus\dots\oplus V_{p^n-\lambda_1}\quad\textup{where}\footnotemark\footnotetext{Surprisingly, the fact that $b=\min(r,s)$ does not appear in~\cite{Green1962}. Its proof is easy, see~\cite[Lemma\;9(a)]{GPX}.}\quad b=\min(r,s).
$$
\end{proposition}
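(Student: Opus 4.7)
The plan is to exploit projectivity of the regular module $V_{p^n}=FC_{p^n}$ together with Schanuel's lemma. Identifying $V_{p^n}$ with $F[x]/(x^{p^n})$, where $x=\sigma-1$ for a generator $\sigma$ of $C_{p^n}$, the submodule $x^rF[x]/(x^{p^n})$ is isomorphic to $V_{p^n-r}$ with quotient $V_r$. Tensoring the resulting short exact sequence $0\to V_{p^n-r}\to V_{p^n}\to V_r\to 0$ with $V_s$ over $F$ preserves exactness, giving
\begin{equation*}
0\to V_{p^n-r}\otimes V_s\to V_{p^n}\otimes V_s\to V_r\otimes V_s\to 0.
\end{equation*}
Projectivity of $V_{p^n}$ forces $V_{p^n}\otimes V_s\cong s\,V_{p^n}$ (by a dimension count and Krull-Schmidt, since $V_{p^n}$ is the unique indecomposable projective), so this sequence is a projective presentation of $V_r\otimes V_s$.

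Next, for each $\lambda_i$ I would combine the analogous sequences $0\to V_{p^n-\lambda_i}\to V_{p^n}\to V_{\lambda_i}\to 0$ (with $V_0$ the zero module when $\lambda_i=p^n$) and use the hypothesis $V_r\otimes V_s=\bigoplus_{i=1}^b V_{\lambda_i}$ to obtain a second projective presentation
\begin{equation*}
0\to\bigoplus_{i=1}^b V_{p^n-\lambda_i}\to b\,V_{p^n}\to V_r\otimes V_s\to 0.
\end{equation*}
Applying Schanuel's lemma to these two presentations then yields
\begin{equation*}
(V_{p^n-r}\otimes V_s)\oplus b\,V_{p^n}\;\cong\;\Bigl(\bigoplus_{i=1}^b V_{p^n-\lambda_i}\Bigr)\oplus s\,V_{p^n},
\end{equation*}
and Krull-Schmidt cancellation of $b\,V_{p^n}$ (legitimate since $b=\min(r,s)\leq s$) produces the stated decomposition.

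The argument is essentially formal once $V_{p^n}$ is recognised as the unique indecomposable projective $FC_{p^n}$-module, and I do not anticipate a serious obstacle. The only delicate point is the bookkeeping when $\lambda_i=p^n$ for some $i$: then $V_{p^n-\lambda_i}=V_0$ degenerates to zero, but the short exact sequence $0\to 0\to V_{p^n}\to V_{p^n}\to 0$ is still valid, so Schanuel's lemma applies unchanged. The identity $b=\min(r,s)$ (quoted via the footnote from \cite[Lemma 9(a)]{GPX}) is used only to guarantee $s-b\geq 0$, and the listing $V_{p^n-\lambda_b}\oplus\cdots\oplus V_{p^n-\lambda_1}$ in the conclusion simply reflects the convention $\lambda_1\geq\cdots\geq\lambda_b$.
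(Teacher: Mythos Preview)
Your argument is correct. Note that the paper does not supply its own proof of this proposition: it is quoted verbatim from Green~\cite[(2.5a)]{Green1962} and used only as input to later results, so there is no in-paper argument to compare against.

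That said, your Schanuel's-lemma approach is a clean and self-contained proof. The key fact $V_{p^n}\otimes V_s\cong sV_{p^n}$ is justified exactly as you say (the map $g\otimes m\mapsto g\otimes g^{-1}m$ converts the diagonal action on $FC_{p^n}\otimes_F V_s$ to the left-only action, exhibiting it as free of rank~$s$), and the final cancellation step is sound because each $\lambda_i\geq1$ forces $p^n-\lambda_i<p^n$, so on the right-hand side of the Schanuel isomorphism the multiplicity of $V_{p^n}$ is exactly~$s$; matching against the $b$ explicit copies on the left then yields the $(s-b)V_{p^n}$ summand with no ambiguity. Your treatment of the degenerate case $\lambda_i=p^n$ is also fine. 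Conceptually, the argument explains the appearance of the ``complementary'' dimensions $p^n-\lambda_i$ as first syzygies, which is a pleasant way to see the duality the paper refers to after stating the proposition.
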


Proposition~\ref{prop1} can be a viewed as a `duality' result on $\lambda(r,s,p)$. For more on this duality and some `periodicity' results as well as other properties, the reader is referred to~\cite{GPX}. In this paper, the main results are Theorem~\ref{thm1}, which was described in the abstract, and Theorems~\ref{thm2} and~\ref{thm5}, which were conjectured by McFall~\cite[p.\,87]{McFall1980} using different notation. We also prove in Section~\ref{sec4} some results about the $p$-parts of $\lambda_1,\dots,\lambda_b$ when $|r-s|\leq1$, and prove explicit decomposition formulas when $p=2$. Some of these later results were foreshadowed by McFall~\cite[Theorem~2]{McFall1979} who gave an algorithm for computing the Jordan decomposition when $p=2$.

\begin{theorem}\label{thm1}
Suppose $V_r\otimes V_s=\bigoplus_{i=1}^b V_{\lambda_i}$ where $\Char(F)=p$ is prime.
Then the $p$-parts of $\lcm(r,s)$ and $\gcd(\lambda_1,\dots,\lambda_b)$ are
equal. That is, $\lcm(r,s)_p=\gcd(\lambda_1,\dots,\lambda_b)_p.$
\end{theorem}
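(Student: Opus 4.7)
The plan is to argue by induction on $n$, the least integer with $p^n \geq \max(r,s)$, setting $p^c := \lcm(r,s)_p = \max(r_p, s_p)$. When $c = 0$, both $r$ and $s$ are coprime to $p$, so $\sum_{i=1}^b \lambda_i = rs$ is coprime to $p$, forcing at least one $\lambda_i$ coprime to $p$ and giving $\gcd(\lambda_1,\dots,\lambda_b)_p = 1 = \lcm(r,s)_p$. Since $\lambda(r,s,p) = \lambda(s,r,p)$, for $c \geq 1$ I may assume $p^c \mid s$ and write $s = p^c s'$ with $\gcd(s',p) = 1$.

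Set $G = C_{p^n}$ and $H = \langle g^{p^c}\rangle \cong C_{p^{n-c}}$. The central observation, resting on the characteristic-$p$ identity $(g-1)^{p^c} = g^{p^c}-1$, is that $V_s \cong \textup{Ind}_H^G V_{s'}^H$ as $FG$-modules (both are cyclic of dimension $s$ with annihilator $(g-1)^s$), and more generally $\textup{Ind}_H^G V_\mu^H \cong V_{p^c\mu}$ for every $1 \leq \mu \leq p^{n-c}$. The projection formula for induction then yields
\[
V_r \otimes V_s \;\cong\; V_r \otimes \textup{Ind}_H^G V_{s'}^H \;\cong\; \textup{Ind}_H^G\bigl( V_r\!\downarrow_H\, \otimes\, V_{s'}^H \bigr),
\]
so every indecomposable summand of $V_r \otimes V_s$ has dimension in $p^c\mathbb{Z}$, giving the easy direction $\gcd(\lambda_1,\dots,\lambda_b)_p \geq p^c$. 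For the opposite inequality I write $r = Qp^c + T$ with $0 \leq T < p^c$; the Jordan form of $J_r(1)^{p^c}$ gives $V_r\!\downarrow_H = T V_{Q+1}^H \oplus (p^c - T) V_Q^H$, and the inductive hypothesis over $H$ (combined with $\gcd(s',p) = 1$) shows that the $p$-part of the gcd of the parts of $V_a^H \otimes V_{s'}^H$ is $\lcm(a,s')_p = a_p$ for each $a \in \{Q, Q+1\}$. Setting $\lambda_i = p^c \mu_i$, I conclude $\gcd(\mu_i)_p = \min\{a_p : V_a^H \text{ is a summand of } V_r\!\downarrow_H\}$, which is $1$ in either case: when $T > 0$, one of the consecutive integers $Q, Q+1$ is coprime to $p$; when $T = 0$, the hypothesis $p^c \mid r$ combined with $r_p \leq p^c$ forces $r_p = p^c$ and hence $Q = r/p^c$ is coprime to $p$. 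Therefore $\gcd(\lambda_i)_p = p^c = \lcm(r,s)_p$, completing the induction.

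The main obstacle I expect is verifying cleanly the two module-theoretic identities in characteristic $p$---namely $V_s \cong \textup{Ind}_H^G V_{s'}^H$ and $\textup{Ind}_H^G V_\mu^H \cong V_{p^c\mu}$---and the projection formula, within the Green ring framework of the paper. Once these are in hand, the rest of the argument is essentially bookkeeping.
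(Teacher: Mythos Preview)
Your argument is correct and takes a genuinely different route from the paper's. The paper proves Theorem~\ref{thm1} by induction on $n$ with $\max(r,s)<p^n$, but its inductive step is driven entirely by Renaud's explicit reduction formula (Proposition~\ref{prop2}): one writes $r=r_0p^n+r_1$, $s=s_0p^n+s_1$, and then performs a four-case analysis according to whether $r_1$ or $s_1$ vanishes, reading off the $p$-parts of the summands directly from~\eqref{eq4}. Your proof, by contrast, never touches Renaud's algorithm. You exploit the characteristic-$p$ identity $(g-1)^{p^c}=g^{p^c}-1$ to identify $V_s$ with an induced module, invoke the tensor identity $V_r\otimes\textup{Ind}_H^G W\cong\textup{Ind}_H^G(V_r{\downarrow_H}\otimes W)$, and use the elementary Jordan computation $V_r{\downarrow_H}=T\,V_{Q+1}^H\oplus(p^c-T)\,V_Q^H$ to reduce to a strictly smaller group. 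The module-theoretic facts you flag as the ``main obstacle'' are indeed the only things to check, and all three are routine: $\textup{Ind}_H^G V_\mu^H\cong FG/(g-1)^{p^c\mu}FG=V_{p^c\mu}$ follows from $(h-1)^\mu=(g-1)^{p^c\mu}$, the restriction formula is the Jordan form of $(J_r-I)^{p^c}$, and the projection formula is standard.

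What each approach buys: the paper's proof is self-contained once Renaud's formula is in hand, and the same machinery immediately yields Theorem~\ref{thm2} with almost no extra work. Your argument is shorter and more conceptual, explains \emph{why} $p^c$ divides every $\lambda_i$ (they are dimensions of induced modules), and makes the reverse inequality transparent via the coprimality of consecutive integers; it would also adapt readily to other questions about how $p$-divisibility of parts interacts with induction and restriction. Two small points worth making explicit in a write-up: when $Q=0$ the summand $V_Q^H$ is zero, so only $a=Q+1=1$ is in play; and the induction is well-founded because $\max(a,s')\leq p^{n-c}$ (so the least $n'$ with $p^{n'}\geq\max(a,s')$ satisfies $n'\leq n-c<n$).
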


\begin{notation}[Multiplicity]\label{notn} Write
$V_r\otimes V_s=\bigoplus_{i=1}^b V_{\lambda_i}$ as
$V_r\otimes V_s=\bigoplus_{i=1}^t m_iV_{\mu_i}$ where the multiset
$\{\lambda_1,\dots,\lambda_b\}$ has distinct parts $\mu_1>\cdots>\mu_t>0$,
which occur with positive multiplicities $m_1,\dots,m_t$, respectively.
\end{notation}

If~$t$ is much smaller than~$b$, it can be helpful to write
$\bigoplus_{i=1}^t m_iV_{\mu_i}$ instead of~$\bigoplus_{i=1}^b V_{\lambda_i}$.
Observe that $\sum_{i=1}^t m_i=b$ and
$\sum_{i=1}^t m_i\mu_i=\sum_{i=1}^b \lambda_i=rs$. We will commonly switch
between the parts
$\mu_i$ and the corresponding summand $V_{\mu_i}$ with $\dim(V_{\mu_i})=\mu_i$.
Since $\gcd(\lambda_1,\dots,\lambda_b)$ equals $\gcd(\mu_1,\dots,\mu_t)$
and $\lcm(r,s)_p$ equals $\max(r_p,s_p)$, we see that
\begin{equation}\label{X}
  \gcd(\lambda_1,\dots,\lambda_b)_p=\gcd(\mu_1,\dots,\mu_t)_p
  =\gcd((\mu_1)_p,\dots,(\mu_t)_p)=\min((\mu_1)_p,\dots,(\mu_t)_p).
\end{equation}
Using multiplicities as described in Notation~\ref{notn}, we paraphrase
Theorem~\ref{thm1} as follows:
\begin{equation}\label{M}
  V_r\otimes V_s=\bigoplus_{i=1}^t m_iV_{\mu_i}\quad\textup{implies}\quad
  \max(r_p,s_p)=\min((\mu_1)_p,\dots,(\mu_t)_p).
\end{equation}
In 1980, McFall made two conjectures, see p.\;87 of~\cite{McFall1980}. His first
conjecture is proved by Theorem~\ref{thm2} below. His second conjecture
is implied by the formula~\eqref{C2} in Theorem~\ref{thm5}.

\begin{theorem}\label{thm2}
Suppose that $r,s\geq1$ and $V_r\otimes V_s=\bigoplus_{i=1}^tm_iV_{\mu_i}$ where
the summands are nonzero and the $\mu_i$ are distinct. If a multiplicity
satisfies $m_i>1$, then~$\mu_i$ is divisible by~$p$.
\end{theorem}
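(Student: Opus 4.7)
The plan is to induct on $r + s$, exploiting the recursion of Renaud~\cite{Renaud1979} (the very tool the authors describe using repeatedly in Section~\ref{sec1}) together with Theorem~\ref{thm1} as a preliminary reduction. First I would use Theorem~\ref{thm1} to dispose of the case $p \mid r$ or $p \mid s$: in this situation $\max(r_p, s_p) \geq p$, so every $(\mu_i)_p$ is at least $p$, meaning every $\mu_i$ is already divisible by $p$ and the conclusion of Theorem~\ref{thm2} is vacuous. Thus I may assume $p \nmid r$ and $p \nmid s$.

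The base of the induction is the range in which the classical Clebsch--Gordan--Srinivasan decomposition $V_r \otimes V_s = \bigoplus_{i=1}^{\min(r,s)} V_{r+s+1-2i}$ applies (for instance $r + s \leq p + 1$, by Srinivasan's theorem); there all parts are pairwise distinct, every multiplicity equals $1$, and the conclusion of Theorem~\ref{thm2} is vacuous.

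For the inductive step, I would appeal to Renaud's recursion to rewrite
\[
V_r \otimes V_s \;=\; c\, V_{p^k} \;\oplus\; V_{r'} \otimes V_{s'},
\]
where $p^k$ is an appropriate positive power of $p$ (so $p \mid p^k$), $c \geq 0$ is a nonnegative integer counting the ``projective'' summands produced by the reduction, and $r' + s' < r + s$. The inductive hypothesis then applies to $V_{r'} \otimes V_{s'}$. Assembling multiplicities in $V_r \otimes V_s$: a part $\mu \neq p^k$ inherits its multiplicity entirely from $V_{r'} \otimes V_{s'}$, so if this multiplicity is at least $2$ then $p \mid \mu$ by induction; while any contribution to the part $\mu = p^k$ is harmless since $p \mid p^k$.

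The main obstacle I anticipate is verifying that Renaud's recursion strictly decreases $r + s$ in every case arising from the hypothesis $p \nmid r$, $p \nmid s$. In the edge cases where the ``forward'' Renaud reduction is not in the right range, a preliminary application of Proposition~\ref{prop1} (Green's duality) converts $V_r \otimes V_s$ into the equivalent problem on $V_{p^n - r} \otimes V_s$, after which Renaud's formula produces a genuine strict decrease in $r + s$. Once the recursion is set up carefully, the bookkeeping of the combination step is straightforward, since the summands $V_{p^k}$ only ever contribute a single part, and that part is divisible by $p$.
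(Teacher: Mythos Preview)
There is a genuine gap: the recursion you attribute to Renaud is not Renaud's formula. Proposition~\ref{prop2} does not express $V_r\otimes V_s$ as $c\,V_{p^k}\oplus V_{r'}\otimes V_{s'}$; rather, equation~\eqref{eq4} writes $V_r\otimes V_s$ in terms of the \emph{individual parts} $\nu_j$ of the smaller product $V_{r_1}\otimes V_{s_1}$, producing many summands of the form $V_{(s_0-r_0+2i)p^n\pm\nu_j}$ in addition to the summands divisible by~$p^n$. These shifted parts are not the parts of any single tensor product $V_{r'}\otimes V_{s'}$. Nor do Proposition~\ref{prop1} or Proposition~\ref{P2} always furnish a reduction with $r'+s'<r+s$: for $p=3$ one computes $V_4\otimes V_{10}=V_{13}\oplus 3V_9$, and since the smallest admissible $3$-power is~$27$, every dualisation $(r,s)\mapsto(27-r,s)$ or $(r,27-s)$ or $(27-r,27-s)$ increases $r+s=14$. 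One can also check directly (using $\mu_1=r'+s'-m_1$ from Theorem~\ref{thm5}, so that $\mu_1=13$ forces $r'+s'\geq14$) that no pair $(r',s')$ with $r'+s'<14$ yields this decomposition after removing any number of copies of~$V_9$.

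The paper's proof shares your inductive philosophy and also rests on Renaud's algorithm, but it works with the full formula~\eqref{eq4} and inducts on~$n$ with $\max(r,s)<p^n$ rather than on $r+s$. The substance of the argument, which replaces your ``straightforward bookkeeping'', is the verification that two parts arising from the third line of~\eqref{eq4} can coincide only when both are multiples of~$p^n$; this is a short but essential computation. Your preliminary reduction via Theorem~\ref{thm1} to the case $p\nmid rs$ is a pleasant observation (not used in the paper), but it does not sidestep this analysis.
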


\begin{theorem}\label{thm5}
Suppose that $r,s\geq1$ and $V_r\otimes V_s=\bigoplus_{i=1}^tm_iV_{\mu_i}$  as in Notation~\textup{\ref{notn}}. Then the
multiplicities $m_1,\dots,m_t$ determine the
part sizes $\mu_1>\cdots>\mu_t>0$, and conversely,~via
\begin{align}
  \mu_i&=r+s-m_i-2\sum_{j=1}^{i-1} m_j&&\textup{for $1\leq i\leq t$,}\label{E:McF} \\
  m_i&=(-1)^{i-1}\left[r+s+2\sum_{j=1}^{i-1}\mu_j\right]-\mu_i&&\textup{for $1\leq i\leq t$}.\label{C2}
\end{align}
\end{theorem}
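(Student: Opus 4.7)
The two displayed formulas constitute inverse linear systems relating the tuples $(m_i)_{i=1}^t$ and $(\mu_i)_{i=1}^t$. Given~(\ref{E:McF}), setting $M_i:=\sum_{j\le i}m_j$ rewrites it as the two-term recursion $M_i+M_{i-1}=(r+s)-\mu_i$ with $M_0=0$; this determines each $M_i$ as an alternating sum of $\mu_1,\dots,\mu_i$, and substituting $m_i=M_i-M_{i-1}$ and simplifying yields~(\ref{C2}). Hence the essential task is to prove~(\ref{E:McF}).

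Writing $c_k:=|\{j:\lambda_j\ge k\}|$ for the conjugate partition, so that $c_{\mu_i}=M_i$ and $c_{\mu_i+1}=M_{i-1}$, equation~(\ref{E:McF}) is equivalent to the single identity
$$\mu+c_\mu+c_{\mu+1}=r+s$$
valid for every value $\mu$ that occurs as a part of $\lambda(r,s,p)$. This splits cleanly into two geometric statements: (a)~$\mu_1+m_1=r+s$ (the largest distinct part), and (b)~$\mu_{i-1}-\mu_i=m_{i-1}+m_i$ for $i\ge 2$ (consecutive distinct parts). Note that~(a) and~(b) are checked in the classical case $p=0$: there $\mu_i=r+s+1-2i$ and $m_i=1$ for all~$i$, giving $\mu_1+m_1=r+s$ and $\mu_{i-1}-\mu_i=2=m_{i-1}+m_i$.

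My plan is to prove~(a) and~(b) by induction on $\min(r,s)$, using Renaud's recursive algorithm~\cite{Renaud1979} (as employed throughout Section~\ref{sec1}) as the main inductive lever. The base case $\min(r,s)=1$ is immediate: $V_1\otimes V_s=V_s$ gives $t=1$, $m_1=1$, $\mu_1=s$, so $\mu_1+m_1=1+s=r+s$, and~(b) is vacuous. For the inductive step, Renaud's recursion expresses the distinct parts and multiplicities of $\lambda(r,s,p)$ in terms of those of a partition $\lambda(r',s',p)$ with strictly smaller $\min$; applying the inductive hypothesis to the smaller partition and then transferring the identity through the recursion is intended to yield the identity for the original $\lambda(r,s,p)$.

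The main obstacle is the compatibility check: Renaud's recursion has several cases depending on the base-$p$ digits of $r$ and $s$, and in each case one must verify that the pairs $(\mu_i,m_i)$ of $\lambda(r,s,p)$, produced from the pairs $(\mu_j',m_j')$ of $\lambda(r',s',p)$, transform the identity $\mu'+c'_{\mu'}+c'_{\mu'+1}=r'+s'$ into $\mu+c_\mu+c_{\mu+1}=r+s$. The already-established Proposition~\ref{prop1} (Green's duality, which sends $\mu_i\mapsto p^n-\mu_i$) and Theorems~\ref{thm1} and~\ref{thm2} (the latter forcing $p\mid\mu_i$ whenever $m_i>1$) should be invoked at intermediate steps to constrain the arithmetic of the transferred identities; in particular the divisibility from Theorem~\ref{thm2} is what guarantees the correct parity on both sides of the relation $\mu_{i-1}-\mu_i=m_{i-1}+m_i$ when consecutive multiplicities exceed~$1$.
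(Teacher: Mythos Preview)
Your proposal is a plan rather than a proof: you correctly reduce~\eqref{E:McF} to the recurrence $\mu_1+m_1=r+s$ and $\mu_{i-1}-\mu_i=m_{i-1}+m_i$, but the entire substance of the argument---pushing this recurrence through the cases of Renaud's formula~\eqref{eq4}---is not carried out. It \emph{can} be done (for instance, within a fixed block $(s_0-r_0+2i)p^n\pm\nu_j$ the recurrence transfers from the inductive hypothesis $\nu_{j-1}-\nu_j=n_{j-1}+n_j$, and at the boundaries one uses $\nu_1+n_1=r_1+s_1$ and $\nu_\ell-n_\ell=|r_1-s_1|$), but you would still need to treat separately the cases $r_1s_1=0$, the cancellation when $p^n-r_1-s_1<0$, the two regimes for $(c,d_1,d_2)$, and the degenerate transitions when a multiplicity on line~1 or~2 vanishes. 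Two further points: your induction on $\min(r,s)$ is the wrong variable, since Renaud's step need not decrease it (if $r<p^n$ then $r_1=r$); induct on $n$ instead. And the appeal to Theorems~\ref{thm1} and~\ref{thm2} is spurious---the identity is an exact integer equality, not a congruence, so $p$-part or ``parity'' information plays no role.

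The paper's proof avoids all of this by switching algorithms. Proposition~\ref{prop5} (Iima--Iwamatsu) already writes the decomposition as $V_r\otimes V_s=\bigoplus_{i=1}^t(k_i-k_{i-1})V_{r+s-k_i-k_{i-1}}$ for an increasing sequence $0=k_0<k_1<\cdots<k_t=r$. This gives $m_i=k_i-k_{i-1}$ and $\mu_i=r+s-k_i-k_{i-1}$ outright, so $\sum_{j\le i}m_j=k_i$ and~\eqref{E:McF} is a one-line substitution; \eqref{C2} then follows by inverting the linear system, exactly as you observed in your first paragraph. In short, your reformulation is sound, but the hard recursive work you propose is unnecessary once one uses the Iima--Iwamatsu parametrisation by the $k_i$.
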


These results have several simple consequences. We mention just one.
Theorem~\ref{thm2} says $p\nmid\mu_1$ implies $m_1=1$, and Theorem~\ref{thm5}
says $m_1=r+s-\mu_1$. Hence $p\nmid\mu_1$ implies $r+s\not\equiv1\pmod p$.
In many fields, theoretical development precedes and informs algorithmic
development. In this field the reverse seems to hold. While algorithms
such as those in~\cite{II2009,McFall1979,Norman1995,Renaud1979} are helpful for
computing Jordan partitions, predicting the output for given input of $r,s,p$
is not at all obvious. Our hope is that the patterns in
Theorems~\ref{thm1},\,\ref{thm2},\,\ref{thm5} that we prove by appealing to
various algorithms may lead, in turn, to simpler, or more efficient, algorithms
for computing Jordan partitions.

The layout of this paper is as follows. Renaud's decomposition algorithm is reviewed in Section~\ref{sec1}, and it is used to prove Theorems~\ref{thm1} and~\ref{thm2} in Section~\ref{sec2}. Section~\ref{sec3} introduces a different decomposition algorithm by Iima and Iwamatsu, and it is used to prove Theorem~\ref{thm5}. In the final section~\ref{sec4},  we establish some new results when $|r-s|\leq1$.

\section{Renaud's Algorithm}\label{sec1}

It is convenient to view $V_r$ as a module for all cyclic
groups $C_{p^n}$ with $p^n\geq r$.
Renaud's algorithm~\cite{Renaud1979} uses induction on $n$ to
decompose $V_r\otimes V_s$ where $n$ is the smallest integer
satisfying $\max(r,s)<p^n$ and $\Char(F)=p$. The inductive step is
achieved by the somewhat complicated reduction formula in
Proposition~\ref{prop2}. (The base case when $n=1$ is described in
Proposition~\ref{prop3}.) Note that the summand
$V_{(s_0-r_0)p^n+\nu_j}$ in~\cite[\textsc{Theorem~2}]{Renaud1979} is
incorporated as the $i=0$ summand on the third line of
equation~\eqref{eq4}.

\begin{proposition}\label{prop2}\textup{\cite[\textsc{Theorem~2}]{Renaud1979}} Suppose $1\leq r\leq s<p^{n+1}$ where $n\geq1$. Write $r=r_0p^n+r_1$ and $s=s_0p^n+s_1$, where $r_0,s_0,r_1,s_1\geq0$ and $r_1,s_1<p^n$. Suppose the decomposition $V_{r_1}\otimes V_{s_1}=\bigoplus_{j=1}^\ell n_jV_{\nu_j}$ has $p^n\geq\nu_1>\cdots>\nu_\ell>0$ and each $n_j>0$. Then
\begin{eqnarray}\label{eq4}
V_r\otimes V_s&=&cV_{p^{n+1}}\,\oplus\,|r_1-s_1|\bigoplus_{i=1}^{d_1}V_{(s_0-r_0+2i)p^n}\,\oplus\,\max(0,r_1-s_1)V_{(s_0-r_0)p^n}\nonumber\\
&&\oplus\;(p^n-r_1-s_1)\bigoplus_{i=1}^{d_2}V_{(s_0-r_0+2i-1)p^n}\\
&&\oplus\;\bigoplus_{j=1}^\ell n_j\left(\bigoplus_{i=0}^{d_1}V_{(s_0-r_0+2i)p^n+\nu_j}\,\oplus\,\bigoplus_{i=1}^{d_1}V_{(s_0-r_0+2i)p^n-\nu_j}\right)\nonumber,
\end{eqnarray}
where
$$
(c,d_1,d_2)=
\begin{cases}
(0,r_0,r_0)\quad&\text{if $r_0+s_0<p$},\\
(r+s-p^{n+1},p-s_0-1,p-s_0)\quad&\text{if $r_0+s_0\geq p$}.
\end{cases}
$$
\end{proposition}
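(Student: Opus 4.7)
The plan is to induct on $n\ge 1$, with base case $n=1$ handled by Proposition~\ref{prop3}. For the inductive step, I would compute $V_r\otimes V_s$ as an $FC_{p^{n+1}}$-module by combining two pieces of data: a ``coarse'' restriction to the subgroup $\langle u^{p^n}\rangle\cong C_p$, and a ``fine'' piece $V_{r_1}\otimes V_{s_1}$ viewed as an $FC_{p^n}$-module, known by the inductive hypothesis.

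The coarse data is obtained via the characteristic-$p$ Frobenius identity $(u-1)^{p^n}=u^{p^n}-1$. Setting $x=u-1$ and $y=u^{p^n}-1$, so that $y=x^{p^n}$ on every $FC_{p^{n+1}}$-module, a direct computation shows that the restriction $V_r|_{\langle u^{p^n}\rangle}$ decomposes as $r_1 V_{r_0+1}^{C_p}\oplus(p^n-r_1)V_{r_0}^{C_p}$ (with the convention $V_0=0$), and similarly for $V_s$. Tensoring and applying Proposition~\ref{prop3} to each of the four $C_p$-products $V_a^{C_p}\otimes V_b^{C_p}$ with $a\in\{r_0,r_0+1\}$, $b\in\{s_0,s_0+1\}$ yields the complete $C_p$-decomposition of $V_r\otimes V_s$, and hence the multiset of $y$-block sizes that appear.

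Since each $FC_{p^{n+1}}$-indecomposable $V_\mu$ with $\mu=\mu_0p^n+\mu_1$ contributes $\mu_1$ blocks of $y$-size $\mu_0+1$ and $p^n-\mu_1$ blocks of $y$-size $\mu_0$, the coarse data pins down the overall shape of the decomposition but leaves ambiguity in the precise $\mu$-values. This ambiguity is resolved by the inductive hypothesis: writing $V_{r_1}\otimes V_{s_1}=\bigoplus_j n_j V_{\nu_j}$, the summands $V_{\nu_j}$ supply the offsets $\pm\nu_j$ appearing on the third line of~\eqref{eq4}, while the pure multiples of $p^n$ on the first two lines arise from the ``no-offset'' contributions in which each $y$-block fills an entire indecomposable.

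The main obstacle will be the case analysis distinguishing $r_0+s_0<p$ from $r_0+s_0\ge p$. In the latter, Proposition~\ref{prop3} produces a projective $V_p^{C_p}$-term, which lifts to $cV_{p^{n+1}}$ with $c=r+s-p^{n+1}$, and the summation ranges $d_1,d_2$ must be adjusted so that every index $(s_0-r_0+2i)p^n\pm\nu_j$ lies in $(0,p^{n+1}]$. Additional bookkeeping is required to collapse the $|r_1-s_1|$-coefficient when $r_1=s_1$ and to produce the correction term $\max(0,r_1-s_1)V_{(s_0-r_0)p^n}$ when $r_1>s_1$; verifying the dimension identity $\sum\lambda_i=rs$ throughout provides a useful consistency check.
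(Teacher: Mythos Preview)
First, a framing point: the paper does not give its own proof of this proposition; it is quoted from Renaud~\cite{Renaud1979} and used as a black box. So there is no ``paper's proof'' to compare against, only Renaud's original argument, which proceeds via Green-ring identities (in particular the scaling formula \cite[\textsc{Lemma}~2.2]{Renaud1979} used in Case~1 of Theorem~\ref{thm1}, together with explicit product relations expressing $V_{r_0p^n+r_1}$ in terms of simpler modules). Also note that the statement is a single-step reduction with the decomposition of $V_{r_1}\otimes V_{s_1}$ \emph{assumed}; no induction on $n$ is needed to formulate or prove it, so your inductive framing is slightly off.

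Your restriction idea is sound as far as it goes: the computation of $V_r|_{\langle u^{p^n}\rangle}$ is correct, and tensoring does yield the multiset of $y$-block sizes of $V_r\otimes V_s$. The genuine gap is the step ``this ambiguity is resolved by the inductive hypothesis''. You never specify a mechanism that identifies $V_{r_1}\otimes V_{s_1}$ with any natural sub\-quotient of $V_r\otimes V_s$, and there is no obvious one. For instance, $\ker(y)$ and $\mathrm{coker}(y)$ on $V_r\otimes V_s$ are both $C_{p^n}$-modules of dimension $\min(r,s)\cdot 1+\cdots$ built from pieces $V_{\min(\lambda_i,p^n)}$, not from the $V_{\nu_j}$; and since $y$ acts on the tensor product as $y\otimes 1+1\otimes y+y\otimes y$, neither $\ker(y)$ nor $V_r/yV_r$ factors as a tensor product of the corresponding pieces of $V_r$ and $V_s$. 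Likewise $V_r/yV_r\cong V_{\min(r,p^n)}$, which is $V_{p^n}$ rather than $V_{r_1}$ whenever $r_0\ge 1$. So the assertion that ``the summands $V_{\nu_j}$ supply the offsets $\pm\nu_j$'' is exactly what needs proof, and the coarse $C_p$-data together with the bare isomorphism type of $V_{r_1}\otimes V_{s_1}$ do not determine the $C_{p^{n+1}}$-decomposition without an additional structural link. To make your approach work you would need either an explicit filtration of $V_r\otimes V_s$ whose graded pieces are visibly built from $V_{r_1}\otimes V_{s_1}$ and its Heller translates, or (as Renaud does) Green-ring identities that let you expand $V_{r_0p^n+r_1}\otimes V_{s_0p^n+s_1}$ algebraically.
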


Observe that~\eqref{eq4} fails to be a decomposition only when the multiplicity $p^n-r_1-s_1$ on the second line of~\eqref{eq4} is negative. However, in this case the whole second line cancels with some terms on the third line; see the remarks following Lemma~\ref{lem1}. To see how cancellation occurs in the Green ring to obtain a decomposition, we need a lemma.

\begin{lemma}\label{lem1}
Suppose $r_1,s_1$ are positive integers satisfying $r_1,s_1\leq p^n$ and $r_1+s_1>p^n$. Then the largest part of $\lambda(r_1,s_1,p)$ is $p^n$, and it occurs with multiplicity $r_1+s_1-p^n$. 
That is, if $V_{r_1}\otimes V_{s_1}=\bigoplus_{j=1}^l n_jV_{\nu_j}$  using \textup{Notation~\ref{notn}}, then $\nu_1=p^n$ and $n_1=r_1+s_1-p^n$.
\end{lemma}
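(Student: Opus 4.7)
My plan is to use the duality formula of Proposition~\ref{prop1} together with the Krull--Schmidt count $b=\min(r,s)$ to determine how many parts of $\lambda(r_1,s_1,p)$ equal $p^n$. Since we may view both $V_{r_1}$ and $V_{s_1}$ as $FC_{p^n}$-modules, every part lies in $\{1,\dots,p^n\}$, so proving that $\nu_1=p^n$ with multiplicity $n_1=r_1+s_1-p^n$ is equivalent to showing that exactly $r_1+s_1-p^n$ of the parts $\lambda_i$ equal $p^n$.

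First I would dispose of the easy boundary case where $r_1=p^n$ (or symmetrically $s_1=p^n$). Since $V_{p^n}$ is the regular, hence free, $FC_{p^n}$-module, $V_{p^n}\otimes V_{s_1}$ is free of rank~$s_1$, and a dimension count gives $V_{p^n}\otimes V_{s_1}\cong s_1V_{p^n}$; here $s_1=r_1+s_1-p^n$, as required.

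The main case is $r_1,s_1<p^n$, and without loss of generality $r_1\le s_1$, so $b=r_1$. Writing $\lambda(r_1,s_1,p)=(\lambda_1,\dots,\lambda_{r_1})$ and applying Proposition~\ref{prop1} yields
\[
V_{p^n-r_1}\otimes V_{s_1}=(s_1-r_1)V_{p^n}\oplus V_{p^n-\lambda_{r_1}}\oplus\dots\oplus V_{p^n-\lambda_1}.
\]
The hypothesis $r_1+s_1>p^n$ forces $p^n-r_1<s_1$, so the left-hand side decomposes into $\min(p^n-r_1,s_1)=p^n-r_1$ nonzero indecomposable summands. On the right-hand side, a summand $V_{p^n-\lambda_i}$ is the zero module precisely when $\lambda_i=p^n$; letting $k$ denote the number of indices~$i$ with $\lambda_i=p^n$, the count of nonzero summands on the right is $(s_1-r_1)+(r_1-k)=s_1-k$. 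Equating the two counts yields $k=r_1+s_1-p^n$, which is positive by hypothesis. Since all parts are at most $p^n$, the existence of a part equal to $p^n$ forces $\nu_1=p^n$ with multiplicity $n_1=k=r_1+s_1-p^n$.

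The only delicate point is correctly tracking the zero summands in the duality formula, and this is the main obstacle: one must observe both that $\lambda_i=p^n$ is the sole way for $V_{p^n-\lambda_i}$ to vanish, and that each nonzero $V_{p^n-\lambda_i}$ has dimension strictly less than $p^n$ (so does not accidentally enlarge the multiplicity of $V_{p^n}$ on the right). Once this bookkeeping is set up, the result drops out of Proposition~\ref{prop1} and $b=\min(r,s)$.
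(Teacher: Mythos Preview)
Your proof is correct and uses the same duality idea (Proposition~\ref{prop1}) as the paper. The only difference is the direction in which you apply it: the paper starts from the decomposition of $V_{p^n-r_1}\otimes V_{s_1}$ (which has $p^n-r_1$ strictly positive parts) and reads off $V_{r_1}\otimes V_{s_1}=(r_1+s_1-p^n)V_{p^n}\oplus\cdots$ directly, so the multiplicity $r_1+s_1-p^n$ appears with no counting needed; you instead start from $V_{r_1}\otimes V_{s_1}$ and recover the multiplicity by counting zero summands on the dual side. Both are short and valid, but the paper's orientation avoids the bookkeeping you flagged as ``the only delicate point''.
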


\begin{proof}
By our assumption, $p^n-r_1<s_1$ and $\min(p^n-r_1,s_1)=p^n-r_1$. Suppose that $\lambda(p^n-r_1,s_1,p)=(\lambda_1,\dots,\lambda_{p^n-r_1})$; equivalently $V_{p^n-r_1}\otimes V_{s_1}=V_{\lambda_1}\oplus\dots\oplus V_{\lambda_{p^n-r_1}}$, where $\lambda_1\geq\dots\geq\lambda_{p^n-r_1}>0$. Then by Proposition~\ref{prop1},
$$
V_{r_1}\otimes V_{s_1}=(s_1-p^n+r_1)V_{p^n}\,\oplus\,V_{p^n-\lambda_{p^n-r_1}}\oplus\dots\oplus V_{p^n-\lambda_1}.
$$
The largest part, and its multiplicity, can now be determined as
$p^n>p^n-\lambda_{p^n-r_1}$.
\end{proof}

We now establish the way that canceling occurs in~\eqref{eq4}
when $p^n-r_1-s_1<0$ in order to obtain a decomposition (whose multiplicities are, by definition, always nonnegative).
Suppose that $p^n-r_1-s_1<0$. Then Lemma~\ref{lem1} gives
$$
V_{r_1}\otimes V_{s_1}=(r_1+s_1-p^n)V_{p^n}\,\oplus\,\bigoplus_{j=2}^l n_jV_{\nu_j}.
$$
Thus the summand corresponding to $j=1$ in the third line of~\eqref{eq4} is
\begin{equation}\label{eq8}
(r_1+s_1-p^n)\left(\bigoplus_{i=0}^{d_1}V_{(s_0-r_0+2i)p^n+p^n}\,\oplus\,\bigoplus_{i=1}^{d_1}V_{(s_0-r_0+2i)p^n-p^n}\right).
\end{equation}
When $r_0+s_0<p$, we have from Proposition~\ref{prop2} that
$d_2=d_1$ and the second line of ~\eqref{eq4} may be written as
\[
  (p^n-r_1-s_1)\bigoplus_{i=1}^{d_2}V_{(s_0-r_0+2i-1)p^n}=
  -(r_1+s_1-p^n)\bigoplus_{i=1}^{d_1}V_{(s_0-r_0+2i)p^n-p^n}.
\]
This cancels with the second sum in~\eqref{eq8}. On the other hand,
when $r_0+s_0\geq p$, we have $d_2=d_1+1$ and the second line
of~\eqref{eq4} may be written as
\begin{align*}
  (p^n-r_1-s_1)\bigoplus_{i=1}^{d_2}V_{(s_0-r_0+2i-1)p^n}&=
  -(r_1+s_1-p^n)\bigoplus_{i=1}^{d_1+1}V_{(s_0-r_0+2i-1)p^n}\\
  &=-(r_1+s_1-p^n)\bigoplus_{j=0}^{d_1}V_{(s_0-r_0+2j)p^n+p^n}.
\end{align*}
This cancels with the first sum in~\eqref{eq8}.
Therefore, after canceling in this way, (\ref{eq4}) becomes a decomposition for $V_r\otimes V_s$.

In order to complete Renaud's inductive reduction in Proposition~\ref{prop2}, we must specify what happens when $n=1$. This amounts to knowing how $V_r\otimes V_s$ decomposes when $1\leq r\leq s<p$. Such a decomposition is given in Proposition~\ref{prop3}. It can be deduced easily from \cite[Corollary~1, p.\,687]{Srinivasan1964} and Proposition~\ref{prop1}.

\begin{proposition}\label{prop3}\textup{\cite[\textsc{Theorem~1}]{Renaud1979}}
If $1\leq r\leq s\leq p$, then $V_r\otimes V_s$ decomposes as
\begin{equation}\label{eq10}
  V_r\otimes V_s=\bigoplus_{i=1}^eV_{s-r+2i-1}\oplus(r-e)V_p,
  \quad\textup{where}\quad
e=
\begin{cases}
r\quad&\text{if $r+s\leq p$},\\
p-s\quad&\text{if $r+s> p$}.
\end{cases}
\end{equation}
\end{proposition}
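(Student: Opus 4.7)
The plan is to split on whether $r+s\leq p$ or $r+s>p$, applying Srinivasan's formula (quoted after the introduction of $\lambda(r,s,p)$) directly in the first case and combining it with the duality of Proposition~\ref{prop1} in the second.

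For the first case $r+s\leq p$, we have $p\geq r+s-1$, so Srinivasan gives $\lambda_i=r+s+1-2i$ for $i=1,\dots,r$. Reversing the index via $j=r-i+1$ rewrites these parts as $s-r+2j-1$ for $j=1,\dots,r$; no part equals $p$, which matches~\eqref{eq10} with $e=r$ and $r-e=0$.

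For the second case $r+s>p$, set $r'=p-s$. Then $r'+r\leq p$ (since $r\leq s$) and $r'<r$ (since $r+s>p$). The boundary $s=p$ gives $r'=0$, and here $V_r\otimes V_p=rV_p$ (the free module), matching~\eqref{eq10} with $e=0$. Otherwise $r'\geq1$ and Case~1 applies to $V_{r'}\otimes V_r$, yielding parts of the form $r+s-p+2i-1$ for $i=1,\dots,p-s$. Now apply Proposition~\ref{prop1} with $n=1$, using $r'$ in the role of~$r$ and $r$ in the role of~$s$; since $p-r'=s$, this converts the previous decomposition into one for $V_s\otimes V_r=V_r\otimes V_s$, whose $V_p$-multiplicity is $r-r'=r+s-p$ and whose remaining parts are $p-(r+s-p+2i-1)$. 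A further re-indexing puts them in non-increasing order as $s-r+2j-1$ for $j=1,\dots,p-s$, giving~\eqref{eq10} with $e=p-s$.

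There is no substantial obstacle; the argument is essentially careful bookkeeping. The only point requiring attention is that Proposition~\ref{prop1} lists the parts of the dual decomposition in ascending order (opposite to the non-increasing convention of~\eqref{eq10}), so two successive re-indexings are needed to match the desired form. The boundary case $s=p$ must be singled out because $V_0$ on the input side makes Srinivasan's formula vacuous, but the conclusion $V_r\otimes V_p=rV_p$ is immediate from the projectivity of $V_p$.
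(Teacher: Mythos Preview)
Your proof is correct and follows exactly the route the paper indicates: the paper does not spell out a proof of Proposition~\ref{prop3} but simply remarks that it ``can be deduced easily from \cite[Corollary~1, p.\,687]{Srinivasan1964} and Proposition~\ref{prop1},'' which is precisely your two-case split (Srinivasan directly when $r+s\leq p$, then Green's duality on top of Srinivasan when $r+s>p$). Your handling of the boundary $s=p$ via the freeness of $V_p$ and the careful re-indexing to match the non-increasing convention are the only details needed, and you have them right.
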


We will need a version of Proposition~\ref{prop3} which works independent of
the relative sizes of $r$ and $s$. This is easy when $r+s\leq p$.
In the case $r+s\geq p$ we have $e=p-s$ in~\eqref{eq10}. The subscript
$s-r+2i-1$ in equation~\eqref{eq10} equals $2p-r-s-2j+1$ where
$j:=e-i+1$ satisfies
$1\leq j\leq e$. This establishes the following symmetrised version
of~\eqref{eq10}.
\begin{corollary}\label{symcor}
If $1\leq r\leq p$ and $1\leq s\leq p$, then $V_r\otimes V_s$ decomposes as
\begin{equation}\label{eq2}
V_r\otimes V_s=
\begin{cases}
\bigoplus_{j=1}^{\min(r,s)}V_{r+s-2j+1}\quad&\text{if $r+s\leq p$},\\
(r+s-p)V_p\oplus\bigoplus_{j=1}^{p-\max(r,s)}V_{2p-r-s-2j+1}\quad&\text{if $r+s> p$}.
\end{cases}
\end{equation}
\end{corollary}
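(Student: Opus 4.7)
The plan is to deduce Corollary~\ref{symcor} directly from Proposition~\ref{prop3} by first reducing to the ordered case $r\le s$ via the symmetry $\lambda(r,s,p)=\lambda(s,r,p)$, and then performing a single change of summation index to rewrite equation~\eqref{eq10} in a form that is manifestly symmetric in $r$ and $s$. The observation that drives the reindexing is already flagged in the paragraph preceding the corollary: the subscripts $s-r+2i-1$ appearing in~\eqref{eq10} can be flipped into $r+s-2j+1$ (respectively $2p-r-s-2j+1$) by letting $j:=e-i+1$.

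First I would invoke $V_r\otimes V_s\cong V_s\otimes V_r$ to assume $r\le s$, so that $\min(r,s)=r$ and $\max(r,s)=s$, making Proposition~\ref{prop3} directly applicable. If $r+s\le p$, then $e=r$, the $(r-e)V_p$ summand vanishes, and I am left with $\bigoplus_{i=1}^{r}V_{s-r+2i-1}$; substituting $j:=r-i+1$ is a bijection on $\{1,\dots,r\}$ that transforms $s-r+2i-1$ into $r+s-2j+1$, matching the first line of~\eqref{eq2}. If instead $r+s>p$, then $e=p-s$ and the $V_p$-multiplicity becomes $r-(p-s)=r+s-p$; the remaining summand $\bigoplus_{i=1}^{p-s}V_{s-r+2i-1}$ is reindexed by $j:=e-i+1=p-s-i+1$, under which $s-r+2i-1$ becomes $2p-r-s-2j+1$, yielding the second line of~\eqref{eq2}.

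I do not foresee any real obstacle: the whole argument is an elementary reindexing of a finite direct sum. The one point to check carefully is that both formulas in~\eqref{eq2} depend on $r,s$ only through the symmetric quantities $r+s$, $\min(r,s)$, and $\max(r,s)$, so the temporary assumption $r\le s$ can be dropped at the end without changing the statement.
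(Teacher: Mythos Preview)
Your proposal is correct and follows essentially the same approach as the paper's own argument preceding the corollary: reduce to $r\le s$, then reindex~\eqref{eq10} via $j:=e-i+1$ to obtain the symmetric forms in~\eqref{eq2}. The paper only spells out the substitution in the case $r+s>p$ (calling the case $r+s\le p$ ``easy''), whereas you handle both cases uniformly, but the idea is identical.
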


Corollary~\ref{symcor} arises in the context of tilting modules of the
special linear group $\textup{SL}(2,\F_p)$ as we now explain.
Brauer and Nesbitt~\cite{BrauerNesbitt} showed that $\textup{SL}(2,\F_p)$ has
precisely $p$ nonisomorphic indecomposable modules over the field $\F_p$, say
$V'_1,\dots,V'_p$ where $\dim(V'_r)=r$. Indeed, $V'_r$ comprises the homogeneous
polynomials in $\F_p[x,y]$ of degree~$r-1$ and
$g=\left(\begin{smallmatrix}a&b\\c&d\end{smallmatrix}\right)$ acts on $V'_r$ via
$x^g=ax+by$ and $y^g=cx+dy$. The restriction of $V'_r$ to the subgroup
$\langle\left(\begin{smallmatrix}1&1\\0&1\end{smallmatrix}\right)\rangle$
of $\textup{SL}(2,\F_p)$ gives the familiar module $V_r$. We thank
Martin Liebeck for showing us how to prove Corollary~\ref{symcor}
using tilting modules for $\textup{SL}(2,\F_p)$; see~\cite{Humphreys1975}.

\section{Proofs of Theorems \ref{thm1} and \ref{thm2}}\label{sec2}

Suppose that $r,s\geq1$ and
$V_r\otimes V_s=V_{\lambda_1}\oplus\cdots\oplus V_{\lambda_b}$ where $b=\min(r,s)$.
In this section we will prove two new results concerning the $p$-parts
$(\lambda_i)_p$ of the $\lambda_i$. We begin by proving that the $p$-parts
of $\lcm(r,s)$ and $\gcd(\lambda_1,\dots,\lambda_b)$ are equal,
i.e. $\lcm(r,s)_p=\gcd(\lambda_1,\dots,\lambda_b)_p$.
It is sometimes more convenient to prove
$\max(r_p,s_p)=\min((\mu_1)_p,\dots,(\mu_t)_p)$ by~\eqref{X}.

\begin{proof}[Proof of Theorem~\textup{\ref{thm1}}]
Write $V_r\otimes V_s=V_{\lambda_1}\oplus\cdots\oplus V_{\lambda_b}$, and let $p^n$ be a $p$-power satisfying $\max(r,s)<p^n$. Write
$V_r\otimes V_s=\bigoplus_{i=1}^t m_i V_{\mu_i}$ where
$p^n\geq\mu_1>\cdots>\mu_t>0$ and each $m_i>0$ as in Notation~\ref{notn}.
We use induction on~$n$ to prove the statement~\eqref{M} paraphrasing
Theorem~\ref{thm1}. 

First suppose that $n=1$, and hence $\lcm(r,s)_p=1$. Then $r_p=s_p=1$
and hence $p\nmid\dim(V_r\otimes V_s)$. So if
$V_r\otimes V_s=\bigoplus_{i=1}^t m_iV_{\mu_i}$, then
$p\nmid\gcd(\mu_1,\dots,\mu_t)_p$. This
establishes Theorem~\ref{thm1} when $n=1$.

Suppose by induction that Theorem~\ref{thm1} holds for $\max(r,s)<p^n$ and fixed $n\geq1$. We now show that it also holds for $\max(r,s)<p^{n+1}$. Without loss of generality, assume $r\leq s<p^{n+1}$. Write $r=r_0p^n+r_1$ and $s=s_0p^n+s_1$ where $r_0,s_0,r_1,s_1\geq0$ and $r_1,s_1<p^n$. Clearly $r_0\leq s_0<p$. 
The remainder of the proof is divided into four cases.

\textbf{Case 1.} $r_1=s_1=0$. Since $r\leq s<p^{n+1}$, we deduce from $r=r_0p^n$ and $s=s_0p^n$ that $1\leq r_0\leq s_0<p$.  
Suppose that
$V_{r_0}\otimes V_{s_0}=\sum_{j=1}^l n_jV_{\nu_j}$ where $\nu_1>\cdots>\nu_\ell>0$ and
each $n_j>0$. 
It follows by \cite[\textsc{Lemma} 2.2]{Renaud1979} that $V_r\otimes V_s=\sum_{j=1}^l p^n n_jV_{p^n\nu_j}$. Hence $\mu_j=p^n\nu_j$ and $m_j=p^n n_j$ for each $j$. Now $1=\max((r_0)_p,(s_0)_p)=\min((\nu_1)_p,\dots,(\nu_l)_p)$ by induction. Multiplying this equation by $p^n$ gives
\[
  p^n=\max(r_p,s_p)=\min(p^n(\nu_1)_p,\dots,p^n(\nu_l)_p)
  =\min((\mu_1)_p,\dots,(\mu_l)_p).
\]
This is equivalent to $\lcm(r,s)_p=\gcd(\lambda_1,\dots,\lambda_b)_p$,
as desired.

\textbf{Case 2.} $r_1=0$ and $s_1>0$. In this case, $\lcm(r,s)_p=r_p=p^n$ since $p^n$ divides $r$ and $1\leq r\leq s<p^{n+1}$. Since $V_0\otimes V_{s_1}=V_0$, the partition $\lambda(r_1,s_1,p)$ has no parts, and the sum on the last line of~\eqref{eq4} is empty. Thus Proposition~\ref{prop2} gives
\begin{equation}\label{eq5}
V_r\otimes V_s=cV_{p^{n+1}}\,\oplus\,s_1\bigoplus_{i=1}^{d_1}V_{(s_0-r_0+2i)p^n}\,\oplus\,(p^n-s_1)\bigoplus_{i=1}^{d_2}V_{(s_0-r_0+2i-1)p^n},
\end{equation}
where $(c,d_1,d_2)$ is defined in Proposition~\ref{prop2}. It is clear from (\ref{eq5}) that $p^n$ divides each of $\lambda_1,\dots,\lambda_b$, and thus divides $\gcd(\lambda_1,\dots,\lambda_b)$. The following paragraph shows that $\gcd(\lambda_1,\dots,\lambda_b)_p$ divides $p^n$.

If $s_0=p-1$, then $(c,d_1,d_2)=(r+s-p^{n+1},0,1)$ in Proposition~\ref{prop2} since $r_0+s_0\geq1+s_0=p$. Any sum of the form $\bigoplus_{i=1}^0 W_i$ equals $0$,
so equation~\eqref{eq5} becomes
$$
V_r\otimes V_s=(r+s-p^{n+1})V_{p^{n+1}}\,\oplus\,(p^n-s_1)V_{(p-r_0)p^n}.
$$
Note that $r+s-p^{n+1}\geq p^n+s-p^{n+1}=s-s_0p^n=s_1>0$ and $p^n-s_1>0$. Hence $\gcd(\lambda_1,\dots,\lambda_b)$ divides $p^{n+1}-(p-r_0)p^n=r_0p^n$
and so $\gcd(\lambda_1,\dots,\lambda_b)_p$ divides $p^n$.
If $s_0\leq p-2$, then Proposition~\ref{prop2} shows that $d_2\geq d_1\geq\min(r_0,p-s_0-1)\geq1$, and hence $\gcd(\mu_1,\dots,\mu_t)$ divides $(s_0-r_0+2)p^n-(s_0-r_0+1)p^n=p^n$ in light of (\ref{eq5}). In summary, $\gcd(\mu_1,\dots,\mu_t)_p$ divides $p^n$ in both cases. Thus $\lcm(r,s)_p=\gcd(\lambda_1,\dots,\lambda_b)_p=p^n$.

\textbf{Case 3.} $r_1>0$ and $s_1=0$. In this case, $\lcm(r,s)_p=s_p=p^n$ since $p^n$ divides $s$ and $1\leq r\leq s<p^{n+1}$. 
As above, the decomposition of $V_{r_1}\otimes V_0=V_0$ is empty, so the last line of~\eqref{eq4} vanishes. Hence $V_r\otimes V_s$ equals
\begin{equation}\label{eq6}
cV_{p^{n+1}}\,\oplus\, r_1\bigoplus_{i=1}^{d_1}V_{(s_0-r_0+2i)p^n}\,\oplus\,r_1V_{(s_0-r_0)p^n}\,\oplus\,
(p^n-r_1)\bigoplus_{i=1}^{d_2}V_{(s_0-r_0+2i-1)p^n},
\end{equation}
where $(c,d_1,d_2)$ is defined in Proposition~\ref{prop2}. It is clear from (\ref{eq6}) that $p^n$ divides each $\lambda_i$ and thus divides $\gcd(\lambda_1,\dots,\lambda_b)$. The next paragraph shows that $\gcd(\lambda_1,\dots,\lambda_b)_p$
divides~$p^n$.

If $r_0=0$, then $(c,d_1,d_2)=(0,0,0)$ and (\ref{eq6}) gives $V_r\otimes V_s=r_1V_{s_0p^n}=r_1V_s$. Hence $\gcd(\mu_1,\dots,\mu_t)=\gcd(s)=s$.
Thus $\gcd(\mu_1,\dots,\mu_t)_p=s_p=p^n=\lcm(r,s)_p$, as desired. Thus we can assume $0<r_0<p$. Then $d_2\geq\min(r_0,p-s_0)\geq1$, and so $\gcd(\mu_1,\dots,\mu_t)$ divides $(s_0-r_0+1)p^n-(s_0-r_0)p^n=p^n$ in light of (\ref{eq6}). Consequently $\gcd(\mu_1,\dots,\mu_t)=p^n$, and we conclude that
for all values of $r_0$ that
$\lcm(r,s)_p=p^n=\gcd(\mu_1,\dots,\mu_t)_p$ holds.

\textbf{Case 4.} $r_1>0$ and $s_1>0$. Here $r_p=(r_1)_p$ and $s_p=(s_1)_p$,
and it follows that $\max(r_p,s_p)=\max((r_1)_p,(s_1)_p)<p^n$. 
Suppose that
$V_{r_1}\otimes V_{s_1}=\sum_{j=1}^l n_jV_{\nu_j}$ where
$p^n\geq\nu_1>\cdots>\nu_\ell>0$ and
each $n_j>0$. Our inductive hypothesis implies that
$$
  \min((\nu_1)_p,\dots,(\nu_l)_p)=\max((r_1)_p,(s_1)_p)=\max(r_p,s_p)<p^n.
$$
Assume $(\nu_k)_p=\min((\nu_1)_p,\dots,(\nu_l)_p)$, so $(\nu_k)_p$ divides
each $(\nu_j)_p$. Since $(\nu_k)_p<p^n$, Proposition~\ref{prop2} implies
that $(\nu_k)_p$ divides each $(\mu_i)_p$. Moreover by Proposition~\ref{prop2},
one of the $\mu_i$ is equal to $(s_0-r_0)p^n+\nu_k$ which has $p$-part
$(\nu_k)_p$. Hence $\min((\mu_1)_p,\dots,(\mu_t)_p)$ equals $(\nu_k)_p$ and
it follows that
$$
  \max(r_p,s_p)=\min((\nu_1)_p,\dots,(\nu_l)_p)=(\nu_k)_p=\min((\mu_1)_p,\dots,(\mu_t)_p).
$$
This is equivalent to $\lcm(r,s)_p=\gcd(\lambda_1,\dots,\lambda_b)_p$,
as desired.
\end{proof}

We now prove Theorem~\ref{thm2} which states that each part of $\lambda(r,s,p)$ with multiplicity greater than~1 must be divisible by~$p$. In other words, if $V_r\otimes V_s=\sum_{i=1}^tm_iV_{\mu_i}$ where $\mu_1>\cdots>\mu_t>0$ and
$m_i>0$ for each $i$, then $m_j>1$ implies $p$ divides $\mu_j$.

\begin{proof}[Proof of Theorem~\textup{\ref{thm2}}]
Our proof uses induction on $n$ where $\max(r,s)<p^n$.

The decomposition when $n=1$ is described by~\eqref{eq2}. The only time that
$\lambda(r,s,p)$ has a part with multiplicity more than~1 is when $r+s-p>1$. In this case the part size is~$p$. Thus Theorem~\ref{thm2} is true when $n=1$.

Next suppose that Theorem~\ref{thm2} is true for $\max(r,s)<p^n$ and
fixed $n\geq1$. We will show that it also true when
$p^n\leq\max(r,s)<p^{n+1}$. Without loss of generality, assume $r\leq s$. Set
$r=r_0p^n+r_1$ and $s=s_0p^n+s_1$ where $r_0,s_0,r_1,s_1\geq0$ and
$r_1,s_1<p^n$. Suppose that $V_{r_1}\otimes V_{s_1}=\sum_{j=1}^l
n_jV_{\nu_j}$, where $p^n\geq\nu_1>\cdots>\nu_\ell>0$ and $n_i>0$ for
each~$i$. By the inductive hypothesis, each $\nu_j$ with $n_j>1$ is a
multiple of $p$. The part sizes, or the dimensions of the
indecomposable modules, occurring in the first two lines
of~\eqref{eq4} are each divisible by~$p$. We show in the next
paragraph that the parts occurring in the last line of~\eqref{eq4} are
either distinct, or are divisible by~$p$. Once this has been
established, the inductive hypothesis completes the proof of
Theorem~\ref{thm2}.

The parts in the first sum $\bigoplus_{j=1}^\ell n_j\bigoplus_{i=0}^{d_1}V_{(s_0-r_0+2i)p^n+\nu_j}$ are distinct for distinct $(i,j)$. This is so because
\[
  (s_0-r_0+2i)p^n+\nu_j=(s_0-r_0+2i')p^n+\nu_{j'}\qquad\textup{where $0<\nu_j,\nu_{j'}\leq p^n$}
\]
implies $\nu_j=\nu_{j'}$ and hence $j=j'$; and then $i=i'$ follows. A similar
argument shows that the parts in the second sum
$\bigoplus_{j=1}^\ell n_j\bigoplus_{i=1}^{d_1}V_{(s_0-r_0+2i)p^n-\nu_j}$
are distinct. If a part from the first sum equals a part from
the second sum, then there exist integers $i,i',j,j'$ satisfying
\[
  (s_0-r_0+2i)p^n+\nu_j=(s_0-r_0+2i')p^n-\nu_{j'}.
\]
Hence $2(i'-i)p^n=\nu_j+\nu_{j'}$. However, $0<\nu_j,\nu_{j'}\leq p^n$ implies
$0<\nu_j+\nu_{j'}\leq 2p^n$ and hence $\nu_j+\nu_{j'}$ is divisible by $2p^n$
which is possible only when $\nu_j=\nu_{j'}=p^n$. Thus $2(i'-i)p^n=2p^n$ and
$i'-i=1$. Consequently, a part
from the first sum equals a part from the second sum only when the part sizes
are divisible by~$p^n$ (and hence by~$p$). As remarked above, induction
now completes the proof.
\end{proof}

\section{Iima and Iwamatsu's Algorithm}\label{sec3}

Assume $1\leq r\leq s$ throughout this section. For $k=1,\dots,r$,
define $D_k=D_k(r,s)$ to be the determinant of the $k\times k$ matrix
$A_k$ whose $(i,j)$th entry is $\binom{r+s-2k}{s+i-j-k}$ for $0\leq
i,j<k$.  Given nonnegative integers $M$ and $N$, the matrix
$\begin{pmatrix}\binom{M}{N+i-j}\end{pmatrix}_{0\leq i,j,<k}$ has
determinant $\prod_{i=0}^{k-1}\binom{M+i}{N}/\binom{N+i}{N}$, see
~\cite[p.\;355]{Robert1994}. Setting $M:=r+s-2k$ and $N:=s-k$ gives
the following closed formula
\begin{equation}\label{eq1}
D_k(r,s)=\prod_{i=0}^{k-1}\frac{\binom{r+s-2k+i}{s-k}}{\binom{s-k+i}{s-k}},\quad\textup{where }1\leq k\leq r.
\end{equation}
Even though the right-hand side of~\eqref{eq1} looks like a rational number,
$D_k(r,s)$ is an integer (as it is the determinant of a matrix
with integer entries).
Set $D_0(r,s):=1$, and note that $D_r(r,s)=1$. For $k=0,1,\dots,r$, define
\begin{equation}\label{del}
\delta_k=\delta_k(r,s,p)=
\begin{cases}
0\quad&\text{if $D_k(r,s)\equiv0\pmod{p}$,}\\
1\quad&\text{if $D_k(r,s)\not\equiv0\pmod{p}$.}
\end{cases}
\end{equation}
Thus $\delta_k=1$ says that $A_k$ is invertible when viewed as a
matrix over $\F_p$. In other words, $\delta_k=1$ says that $A_k$ has
full $p$-rank. Iima and Iwamatsu~\cite{II2009} found a way to
construct $\lambda(r,s,p)$ from the $\{0,1\}$-sequence
$\delta_0(r,s,p),\delta_1(r,s,p),\dots,\delta_r(r,s,p)$. This
constrains the number of choices of $\lambda(r,s,p)$ as described
in~\cite{GPX}. Note that $\delta_0=\delta_r=1$ by our convention that
$D_0(r,s)=1$ and $D_r(r,s)=1$. 

For $1\leq k\leq r$, if $\delta_k=1$, let $\ell(k)$ be the smallest
positive integer such that $\delta_{k-\ell(k)}=1$. Note that $\ell(k)$
is well defined since $\delta_0=1$, and $\ell(k)\leq k$. The following
Proposition is proved by the results in \cite{II2009} preceding and
including Theorem 2.2.9.

\begin{proposition}\label{prop4}\textup{\cite[Theorem 2.2.9]{II2009}}
Suppose $1\leq r\leq s$, and use the above notation for $\delta_k$ and
$\ell(k)$ for $1\leq k\leq r$. Then the parts of the Jordan partition
$\lambda(r,s,p)$ can be computed via the following recurrence
where~$k$ decreases from~$r$ to~\textup{1}
\begin{equation*}
\lambda_k=
\begin{cases}
  r+s-2k+\ell(k)\quad&\text{if $\delta_k=1$},\\
  \lambda_{k+1}\quad&\text{if $\delta_k=0$}.
\end{cases}
\end{equation*}
\end{proposition}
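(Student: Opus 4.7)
My plan is to prove Proposition~\ref{prop4} by making explicit the link between the binomial determinants $D_k(r,s)$ and the rank function of the nilpotent operator $T:=J_r(1)\otimes J_s(1)-I$ acting on $V_r\otimes V_s$, and then to extract the partition via Young-diagram / rank-drop combinatorics together with the invariant $\sum_i\lambda_i=rs$.

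The central technical step is to realize $A_k$ as a minor of $T^{r+s-2k}$. Using the polynomial model $V_r\otimes V_s\cong F[x,y]/(x^r,y^s)$, in which $J_r(1)\otimes J_s(1)$ acts by multiplication by $(1+x)(1+y)$, one has $T$ acting by multiplication by $x+y+xy$. Expanding
$$
  J_r(1)^m\otimes J_s(1)^m\;=\;\sum_{i,j}\binom{m}{i}\binom{m}{j}N_r^{\,i}\otimes N_s^{\,j}
$$
(where $N_r,N_s$ are the standard nilpotent shifts) and specialising to $m=r+s-2k$ after restricting to the natural $k$-dimensional subspace spanned by the ``anti-diagonal'' monomials $\{x^{s-k+i}y^{k-1-i}:0\le i<k\}$ and projecting onto the matching $k$-dimensional quotient produces a matrix whose $(i,j)$-entry is precisely $\binom{r+s-2k}{s+i-j-k}$, up to sign and reindexing---that is, the matrix $A_k$. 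Consequently $\delta_k=1$ says exactly that this minor is invertible over $\F_p$.

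With this identification, running $k$ downward from $r$ to $1$ and combining the invertibility statements with the standard rank formula $\textup{rank}(T^m)=\sum_i\max(\lambda_i-m,0)$, with the monotonicity $\lambda_1\ge\lambda_2\ge\cdots$, and with the invariant $\sum_i\lambda_i=rs$, one shows that $\delta_k=1$ corresponds to a ``cliff'' at position $k$, namely a strict drop $\lambda_k>\lambda_{k+1}$. This extracts the recurrence: when $\delta_k=0$, no strict drop occurs at $k$ and so $\lambda_k=\lambda_{k+1}$; when $\delta_k=1$, the plateau of equal parts ending at $k$ has length $\ell(k)$ (the gap to the previous cliff at $k-\ell(k)$), and its common value is pinned down to $r+s-2k+\ell(k) = r+s-k-(k-\ell(k))$ by the two bracketing cliff constraints together with the telescoping identity $\sum_j(k_j-k_{j-1})(r+s-k_j-k_{j-1})=rs$, where $0=k_0<k_1<\cdots<k_m=r$ are the cliff positions.

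The principal technical obstacle is the matrix identification in the second paragraph: verifying that the chosen minor of $T^{r+s-2k}$ genuinely has $(i,j)$-entry $\binom{r+s-2k}{s+i-j-k}$ demands a careful choice of basis and a somewhat delicate multinomial-coefficient computation. Once this identification is in place, the passage from $p$-rank data to the explicit recurrence is a careful but essentially combinatorial argument via classical rank--partition duality; a secondary subtlety is to confirm that the cliff positions together with the sum constraint uniquely determine the plateau values, which follows from the telescoping identity above.
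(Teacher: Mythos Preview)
First, the paper does not itself prove Proposition~\ref{prop4}: it quotes the result from~\cite{II2009} (remarking just before Proposition~\ref{prop5} that the original proof ``is long and somewhat complicated'') and then \emph{uses} it to derive Proposition~\ref{prop5}. So there is no in-paper argument to compare against.

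Your identification of $A_k$ with a graded piece of $T^{r+s-2k}$ in the polynomial model $F[x,y]/(x^r,y^s)$ is essentially correct (after replacing $T$ by the conjugate operator ``multiplication by $x+y$'', and fixing an indexing slip in your stated basis). The genuine gap is the passage from the $\delta_k$ to the partition. Working in the graded model, exactly one Jordan block starts in each degree $0,\dots,r-1$ and exactly one ends in each degree $s-1,\dots,r+s-2$; encoding ``block starting at $i{-}1$ ends at $r{+}s{-}1{-}\sigma(i)$'' by a permutation $\sigma$ of $\{1,\dots,r\}$, the block lengths are $\mu_i=r+s+1-i-\sigma(i)$, and one checks that $\delta_k=1$ iff $\sigma(\{1,\dots,k\})=\{1,\dots,k\}$. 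Thus the sequence $(\delta_0,\dots,\delta_r)$ records only \emph{where} $\sigma$ splits into intervals, not what $\sigma$ does inside each interval. On an interval of length~$3$, the indecomposable permutations already yield two different sorted length-triples, $(c,c,c)$ and $(c{+}1,c,c{-}1)$, both with the same sum, the same Hilbert function, and the same $\delta$-pattern. So there exist abstract graded modules with the same Hilbert function and the same $\delta$-sequence as $F[x,y]/(x^r,y^s)$ but a different Jordan partition, and your listed tools (the $\delta_k$, the rank formula for $T^m$, monotonicity of $\lambda$, and the telescoping identity) cannot by themselves determine the $\lambda_i$. What is missing is a module-specific argument---the actual content of~\cite{II2009}---forcing $\sigma$ to be the reversal on each interval $\{k_{j-1}{+}1,\dots,k_j\}$; equivalently, one may compute the \emph{full} $p$-ranks of all graded pieces as in the older Srinivasan/Ralley approach, from which $\lambda$ is honestly recoverable. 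Your sketch assumes this step rather than proving it.
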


The next proposition is a reformulation of Proposition~\ref{prop4} in
the language of Green ring results. While this result essentially
appears in~\cite{II2009}, its proof is long and somewhat complicated,
so we prefer to give our own proof. Recall the
definition~\eqref{del}~of~$\delta_k$.

\begin{proposition}\label{prop5}\textup{\cite[Theorem 2.2.9]{II2009}} Suppose $1\leq r\leq s$, and all the values of $k$ satisfying $\delta_k(r,s,p)=1$ are $0=k_0<k_1<\dots<k_t=r$.
Then $V_r\otimes V_s$ decomposes as
\begin{equation}\label{eq12}
V_r\otimes V_s=\bigoplus_{i=1}^t(k_i-k_{i-1})V_{r+s-k_i-k_{i-1}}.
\end{equation}
\end{proposition}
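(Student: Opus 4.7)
The plan is to derive Proposition~\ref{prop5} directly from Proposition~\ref{prop4} by grouping together the equal parts produced by the recurrence. Since $1\leq r\leq s$, we have $b=\min(r,s)=r$, so the Jordan partition is $\lambda(r,s,p)=(\lambda_1,\dots,\lambda_r)$, and the task is to identify the blocks of consecutive indices on which $\lambda_k$ is constant and to read off the common value.

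First I would fix an index $i\in\{1,\dots,t\}$ and determine $\lambda_{k_i}$. Since $\delta_{k_i}=1$ and $k_{i-1}$ is by definition the largest element of $\{k_0,\dots,k_t\}$ strictly less than $k_i$, we get $\ell(k_i)=k_i-k_{i-1}$. Hence Proposition~\ref{prop4} gives
\[
  \lambda_{k_i}=r+s-2k_i+(k_i-k_{i-1})=r+s-k_i-k_{i-1}.
\]
Next, for $k$ with $k_{i-1}<k<k_i$ one has $\delta_k=0$, so the second branch of Proposition~\ref{prop4} yields $\lambda_k=\lambda_{k+1}$. A short descending induction on $k$ (starting from $k=k_i-1$ and terminating at $k=k_{i-1}+1$) then shows that $\lambda_k=\lambda_{k_i}=r+s-k_i-k_{i-1}$ throughout the range $k_{i-1}<k\leq k_i$.

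Thus the recurrence produces $k_i-k_{i-1}$ copies of the value $r+s-k_i-k_{i-1}$ for each $i\in\{1,\dots,t\}$. These ranges of indices form a partition of $\{1,\dots,r\}$ since $k_0=0$ and $k_t=r$, and the total multiplicity $\sum_{i=1}^t(k_i-k_{i-1})=k_t-k_0=r$ matches the length of $\lambda(r,s,p)$. Moreover the values $r+s-k_i-k_{i-1}$ are strictly decreasing in $i$ (because $k_i+k_{i-1}$ is strictly increasing), which matches the non-increasing convention for $\lambda(r,s,p)$ and confirms that the grouping is correct. Collecting the summands gives
\[
  V_r\otimes V_s=\bigoplus_{i=1}^t(k_i-k_{i-1})V_{r+s-k_i-k_{i-1}},
\]
as required.

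There is no real obstacle here: the argument is pure bookkeeping on Proposition~\ref{prop4}. The only point that warrants explicit mention is the strict monotonicity of $k_i+k_{i-1}$, which guarantees that the consecutive blocks genuinely correspond to distinct isotypic components $V_{\mu_i}$ rather than being merged into a single larger one.
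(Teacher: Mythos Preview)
Your proof is correct and essentially identical to the paper's own argument: both compute $\ell(k_i)=k_i-k_{i-1}$, apply the two branches of Proposition~\ref{prop4} to see that $\lambda_k=r+s-k_i-k_{i-1}$ for all $k_{i-1}<k\leq k_i$, and then note that the values $r+s-k_i-k_{i-1}$ are pairwise distinct so that~\eqref{eq12} really is a decomposition into distinct isotypic pieces.
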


\begin{proof}
Because $k_{i-1}<k_i$ for $1\leq i\leq t$, we have
\[
  \delta_{k_{i-1}}=1,\quad\delta_{k_{i-1}+1}=\delta_{k_{i-1}+2}=\cdots=\delta_{k_i-1}=0,\quad\textup{and}\quad\delta_{k_i}=1.
\]
Then $\ell(k_i)=k_i-k_{i-1}$. Appealing to second case of the recurrence in Proposition~\ref{prop4} gives
\[
  \lambda_{k_{i-1}+1}=\cdots=\lambda_{k_i-1}=\lambda_{k_i},
\]
and appealing to the first case of Iima and Iwamatsu's recurrence gives
\[
  \lambda_{k_i}=r+s-2k_i+\ell(k_i)=r+s-2k_i+(k_i-k_{i-1})=r+s-k_i-k_{i-1}.
\]
This proves that $V_r\otimes V_s=\bigoplus_{i=1}^t(k_i-k_{i-1})V_{r+s-k_i-k_{i-1}}$.
Since $0=k_0<k_1<\dots<k_t=r$, different values of~$i$
give different values of $r+s-k_i-k_{i-1}$. Hence~\eqref{eq12} is indeed
a decomposition, with distinct parts and positive multiplicities, as claimed.
\end{proof}

It follows from Proposition~\ref{prop5} that the multiplicities
$m_i=k_i-k_{i-1}$, $1\leq i\leq t$,
determine the distinct part sizes $\mu_i=r+s-k_i-k_{i-1}$, $1\leq i\leq t$, and
conversely. Theorem~\ref{thm5} shows how $\mu_1,\dots,\mu_t$ determine
$m_1,\dots,m_t$ via explicit formulas.

\begin{proof}[Proof of \textup{Theorem~\ref{thm5}}]
Our strategy is to prove McFall's
conjecture~\cite[Conjecture\;2]{McFall1980} that
\begin{equation}\label{R}
  m_1=r+s-\mu_1\qquad\textup{and}\qquad m_i=\mu_{i-1}-\mu_i-m_{i-1}\quad 
  \textup{\ for\ }1< i\leq t.
\end{equation}
A straightforward calculation shows that the formula~\eqref{C2}, satisfies
this recurrence relation, and hence that~\eqref{R} implies the
equalities in~\eqref{C2}. Rearranging~\eqref{R} gives
a recurrence relation for computing the $\mu_i$ from the $m_j$, namely
\begin{equation}\label{E:mTOmu}
  \mu_1=r+s-m_1\qquad\textup{and}\qquad \mu_i=\mu_{i-1}-m_i-m_{i-1}\quad 
  \textup{\ for\ }1< i\leq t.
\end{equation}
A further simple calculation shows that the formulas~\eqref{E:McF} are equivalent
to the rearranged recurrence relation~\eqref{E:mTOmu}.

As noted above, Proposition~\ref{prop5} shows that $m_i=k_i-k_{i-1}$ and $\mu_i=r+s-k_i-k_{i-1}$
for $1\leq i\leq t$. The initial condition of~\eqref{R} follows from $k_0=0$ as
\[
  r+s-\mu_1=r+s-(r+s-k_1-k_0)=k_1+k_0=k_1-k_0=m_1.
\]
For $1<i\leq t$, the inductive step of~\eqref{R} also follows easily
as $\mu_{i-1}-\mu_i-m_{i-1}$ equals
\[
  (r+s-k_{i-1}-k_{i-2})-(r+s-k_i-k_{i-1})-(k_{i-1}-k_{i-2})=k_i-k_{i-1}=m_i.
\]
This establishes the recurrence relation~\eqref{R}, and thereby proves Theorem~\ref{thm5}.
\end{proof}

The $p$-divisibility of the integers $D_0(r,s),D_1(r,s),\dots,D_r(r,s)$
plays a central role in Iima and Iwamatsu's algorithm. 
Kummer's theorem~\cite{Granville1997} states that the power of a prime~$p$
dividing $\binom{m}{n}$ is the number of `carries' required to add $m$ and
$n-m$ in base-$p$. This can be used to compute the
largest $p$-power dividing the numerator and denominator of~\eqref{eq1}.
The following lemma gives a more direct approach, and it has a nice application
in Section~\ref{sec4}.

\begin{lemma}\label{lem2}
Suppose $1\leq r\leq s$, and let $D_k(r,s)$ be as in~\eqref{eq1} with
$D_0(r,s)=D_r(r,s)=1$.
\begin{itemize}
\item[(a)] If $0\leq k\leq r$, then $\binom{s}{s-k}D_{k+1}(r+1,s+1)=\binom{r+s-k}{s-k}D_k(r,s)$.
\item[(b)] If $0\leq k\leq r-1$, then $\binom{s}{s-k}D_{k+1}(r,s+1)=\binom{r+s-2k-1}{s-k}D_k(r,s)$.
\item[(c)] If $0\leq k\leq r-1$, then $\binom{r+s-k-1}{k}D_{k+1}(r,s)=\binom{r+s-2k-2}{s-k-1}D_k(r,s)$.
\end{itemize}
\end{lemma}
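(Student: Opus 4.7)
All three identities are multiplicative identities among products of factorials, so the strategy is simply to plug in the explicit formula~\eqref{eq1} and simplify. A useful preliminary rewrite, valid for all three parts, is to set $a:=r-k$ and $b:=s-k$. Expanding the binomial coefficients in~\eqref{eq1} as factorials gives
$$D_k(r,s)=\prod_{i=0}^{k-1}\frac{(a+b+i)!\,i!}{(a+i)!\,(b+i)!}.$$
The binomial coefficients that appear in (a), (b), (c) all have clean expressions in $a,b,k$; for example $\binom{r+s-2k}{s-k}=\binom{a+b}{b}$, $\binom{s}{s-k}=\binom{b+k}{b}$, $\binom{r+s-k}{s-k}=\binom{a+b+k}{b}$, and so on. So in each part it suffices to compute the ratio $D_{k+1}(r',s')/D_k(r,s)$ in the $a,b$ variables and recognise the answer as a ratio of the relevant binomials.

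For (a), the substitution $(r,s,k)\mapsto(r+1,s+1,k+1)$ leaves $a$ and $b$ unchanged, so $D_{k+1}(r+1,s+1)$ has exactly the same form as $D_k(r,s)$ but with one extra factor (the $i=k$ term). The ratio $D_{k+1}(r+1,s+1)/D_k(r,s)$ is therefore the single term $(a+b+k)!\,k!/((a+k)!\,(b+k)!)$, which rewritten in the $r,s,k$ variables is exactly $\binom{r+s-k}{s-k}/\binom{s}{s-k}$.

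For (b) and (c), the parameter pairs $(a',b')$ for $D_{k+1}$ and $(a,b)$ for $D_k$ differ by an integer shift, so the two products share no term in common and one must re-index, for example by sending $i\mapsto i-1$ in $D_k(r,s)$, before the quotient collapses. After the shift, the ratio becomes a telescoping product. For (b), where $(a',b')=(a-1,b)$, the telescoping reduces to $\prod_{j=1}^{k}j/(b+j)=k!\,b!/(b+k)!$, and the final ratio equals $\binom{r+s-2k-1}{s-k}/\binom{s}{s-k}$. For (c), where $(a',b')=(a-1,b-1)$, one first splits off the $i=0$ term of $D_{k+1}(r,s)$, and the remaining telescoping product $\prod_{j=0}^{k-1}(j+1)/(a+b+j)=k!\,(a+b-1)!/(a+b+k-1)!$ yields, after interpretation, $\binom{r+s-2k-2}{s-k-1}/\binom{r+s-k-1}{k}$.

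The main obstacle is purely bookkeeping: aligning the index ranges of the two products after the shift in parts (b) and (c), and keeping track of which of $a$ or $b$ has been decreased. The boundary cases $k=0$ and $k=r-1$ (or $k=r$) are handled uniformly by the factorial product formula, since both the empty product $D_0(r,s)=1$ and the trivial value $D_r(r,s)=1$ are recovered correctly.
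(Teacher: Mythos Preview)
Your proposal is correct and follows essentially the same approach as the paper: a direct calculation from the product formula~\eqref{eq1}, separating off a boundary term and telescoping the rest. Your systematic use of the factorial form $D_k(r,s)=\prod_{i=0}^{k-1}\frac{(a+b+i)!\,i!}{(a+i)!\,(b+i)!}$ with $a=r-k$, $b=s-k$ from the outset is a tidy bookkeeping device (the paper only switches to this form for part~(c)), but the underlying manipulations are the same.
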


\begin{proof}
The proof is by direct calculation using~\eqref{eq1}. Part~(a) follows from
$$
D_{k+1}(r+1,s+1)=\prod\limits_{i=0}^{k}\frac{\binom{r+s-2k+i}{s-k}}{\binom{s-k+i}{s-k}}
=\frac{\binom{r+s-k}{s-k}}{\binom{s}{s-k}}\prod\limits_{i=0}^{k-1}\frac{\binom{r+s-2k+i}{s-k}}{\binom{s-k+i}{s-k}}=\frac{\binom{r+s-k}{s-k}}{\binom{s}{s-k}}D_k(r,s).
$$
The proof of part~(b) follows from the formula~\eqref{eq1} and the identities
$\binom{m-1}{n}=\frac{m-n}{m}\binom{m}{n}$ and
$\binom{m}{n}\prod_{i=0}^{k-1}\frac{m-n-k+i+1}{m-k+i+1}=\binom{m-k}{n}$
\begin{eqnarray*}
D_{k+1}(r,s+1)&=&\frac{\binom{r+s-k-1}{s-k}}{\binom{s}{s-k}}\prod\limits_{i=0}^{k-1}\frac{\binom{r+s-2k-1+i}{s-k}}{\binom{s-k+i}{s-k}}\\
&=&\frac{\binom{r+s-k-1}{s-k}}{\binom{s}{s-k}}\prod\limits_{i=0}^{k-1}\frac{\binom{r+s-2k+i}{s-k}(r-k+i)}{\binom{s-k+i}{s-k}(r+s-2k+i)}
=\frac{\binom{r+s-2k-1}{s-k}}{\binom{s}{s-k}}D_k(r,s).
\end{eqnarray*}
To prove part~(c), we use the identity
$$
D_k(r,s)=\prod\limits_{i=0}^{k-1}\frac{(r+s-2k+i)!i!}{(s-k+i)!(r-k+i)!}.
$$
We now write $D_{k+1}(r,s)$ in terms of the above product
\begin{eqnarray*}
D_{k+1}(r,s)&=&\binom{r+s-2k-2}{s-k-1}\prod\limits_{i=1}^{k}\frac{\binom{r+s-2k-2+i}{s-k-1}}{\binom{s-k-1+i}{s-k-1}}\\
&=&\binom{r+s-2k-2}{s-k-1}\prod\limits_{i=1}^{k}\frac{(r+s-2k-2+i)!i!}{(s-k-1+i)!(r-k-1+i)!}\\
&=&\binom{r+s-2k-2}{s-k-1}\prod\limits_{i=0}^{k-1}\frac{(r+s-2k-1+i)!(i+1)!}{(s-k+i)!(r-k+i)!}\\
&=&\binom{r+s-2k-2}{s-k-1}\prod\limits_{i=0}^{k-1}\frac{(r+s-2k+i)!i!(i+1)}{(s-k+i)!(r-k+i)!(r+s-2k+i)}\\
&=&\frac{\binom{r+s-2k-2}{s-k-1}}{\binom{r+s-k-1}{k}}\prod\limits_{i=0}^{k-1}\frac{(r+s-2k+i)!i!}{(s-k+i)!(r-k+i)!}.
\end{eqnarray*}
\end{proof}

\section{Results for \texorpdfstring{$|r-s|$}{} at most one}\label{sec4}

In this section, we prove several results when $|r-s|\leq1$. First, we
determine the smallest part of $\lambda(r,r,p)$, and its multiplicity.
As usual, we denote the $p$-part of a nonzero integer~$r$ by $r_p$.

Lucas' theorem (see~\cite{Granville1997}) is a useful number-theoretic result
for proving $D_{r-p^k}(r,r)\not\equiv 0\pmod p$, or $\delta_{r-p^k}(r,r,p)=1$
as in~\eqref{del}. This
theorem says that $\binom{m}{n}\equiv\prod_{i\geq0}\binom{m_i}{n_i}\pmod p$ where
$m=\sum_{i\geq0} m_ip^i$ and $n=\sum_{i\geq0} n_ip^i$ are the base-$p$
expansions of $m$ and $n$, respectively. The base-$p$ `digits' $m_i,n_i$ satisfy
$0\leq m_i,n_i<p$. Note that $\binom{m_i}{n_i}=0$
if $m_i<n_i$, and $\binom{m_i}{0}=1$. Thus the infinite product 
$\prod_{i\geq0}\binom{m_i}{n_i}$ is finite, as
$\binom{m_i}{n_i}=1$ for sufficiently large $i$.

\begin{theorem}
The smallest part of $\lambda(r,r,p)$
is $r_p$, and it occurs with multiplicity~$r_p$. Using
Notation~\textup{\ref{notn}} and $b=\min(r,r)=r$, this says that
$\lambda_r=r_p=\mu_t$ and $m_t=r_p$.
\end{theorem}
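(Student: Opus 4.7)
Write $r = r_0 p^n$ where $p^n = r_p$ and $\gcd(r_0,p) = 1$, so $1 \le r_0 \le p-1$. The strategy is to reduce to the case where $r$ is coprime to $p$ and then read off the answer directly from Corollary~\ref{symcor}.

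First I would invoke the $p^n$-scaling identity \cite[Lemma~2.2]{Renaud1979} (the same one used in Case~1 of the proof of Theorem~\ref{thm1}): whenever $p^n \mid r$ and $p^n \mid s$, writing $V_{r/p^n}\otimes V_{s/p^n} = \bigoplus_j n_j V_{\nu_j}$ yields $V_r\otimes V_s = \bigoplus_j (p^n n_j)V_{p^n\nu_j}$. Applied to $r = s = r_0 p^n$, this matches each part (and its multiplicity) of $\lambda(r,r,p)$ to a part (and multiplicity) of $\lambda(r_0,r_0,p)$ scaled by $p^n$. It therefore suffices to show that the smallest part of $V_{r_0}\otimes V_{r_0}$ equals $1$ and has multiplicity $1$.

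Since $r_0 < p$, Corollary~\ref{symcor} decomposes $V_{r_0}\otimes V_{r_0}$ explicitly. When $2r_0 \le p$, it gives $\bigoplus_{j=1}^{r_0}V_{2r_0-2j+1}$, whose smallest summand is $V_1$ (at $j=r_0$) with multiplicity $1$. When $2r_0 > p$, it gives $(2r_0-p)V_p \oplus \bigoplus_{j=1}^{p-r_0}V_{2p-2r_0-2j+1}$, whose smallest summand is again $V_1$ (at $j=p-r_0$) with multiplicity $1$, since the accompanying $V_p$ term is strictly larger. Combining with the scaling step gives $\mu_t = p^n = r_p$ and $m_t = p^n = r_p$, as required.

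There is no serious obstacle, since the scaling lemma and Corollary~\ref{symcor} fit together directly. If one preferred not to cite \cite[Lemma~2.2]{Renaud1979}, one could instead feed $r_1 = s_1 = 0$ into Proposition~\ref{prop2}: the last line of~\eqref{eq4} vanishes and a short subcase analysis on whether $2r_0 < p$ or $2r_0 \ge p$ produces $V_{p^n}$ as the smallest summand with multiplicity $p^n$. Theorem~\ref{thm1} serves as a sanity check, as it already forces $r_p \mid \mu_t$; the real content is the reverse bound $\mu_t \le r_p$, which is supplied by the explicit form of $V_{r_0}\otimes V_{r_0}$.
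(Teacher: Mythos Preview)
Your reduction via the scaling lemma \cite[Lemma~2.2]{Renaud1979} is valid and reduces the problem to showing that the smallest part of $\lambda(r_0,r_0,p)$ is $1$ with multiplicity $1$ whenever $\gcd(r_0,p)=1$. However, the assertion ``so $1\leq r_0\leq p-1$'' is false in general: $r_0 = r/r_p$ can be any positive integer coprime to $p$. For instance, with $p=3$ and $r=45=9\cdot 5$ one has $r_p=9$ and $r_0=5>p-1$; with $p=2$ and $r=6$ one has $r_0=3$. Consequently Corollary~\ref{symcor}, which requires $r_0\leq p$, does not apply, and the argument collapses at exactly the point where the real content lies. Your alternative via Proposition~\ref{prop2} with $r_1=s_1=0$ has the same defect: that proposition's $r_0$ is the leading base-$p^n$ digit and is automatically less than $p$, so it coincides with your $r/r_p$ only when $r<p\cdot r_p$.

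What remains after the scaling step is precisely the case $r_p=1$ of the theorem, and that case is not trivial. The paper proves the full statement by a completely different route: it uses the Iima--Iwamatsu description (Proposition~\ref{prop5}) to reduce to showing $D_{r-j}(r,r)\equiv 0\pmod p$ for $0<j<r_p$ and $D_{r-r_p}(r,r)\not\equiv 0\pmod p$, and then verifies these congruences via Lucas' theorem and Lemma~\ref{lem2}(a). If you want to salvage your approach, you would need an inductive argument (e.g.\ via Proposition~\ref{prop2} or Proposition~\ref{P2}) that handles arbitrary $r_0$ coprime to $p$; this is feasible but requires genuine work, not a one-line read-off from Corollary~\ref{symcor}.
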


\begin{proof}
Suppose that $r_p=p^k$. Then $r=ap^k$ with $a_p=1$. By virtue of
Proposition~\ref{prop5} (or by~\ref{prop4}), it suffices to show that $D_{r-j}(r,r)\equiv0\pmod{p}$ for $0<j<p^k$ and $D_{r-p^k}(r,r)\not\equiv0\pmod{p}$, since $D_r(r,r)=1$.

Using formula~\eqref{eq1} and canceling gives
\begin{equation}\label{eq13}
D_{r-p^k}(r,r)=\prod\limits_{i=0}^{(a-1)p^k-1}\frac{\binom{2p^k+i}{p^k}}{\binom{p^k+i}{p^k}}=\prod\limits_{i=0}^{p^k-1}\frac{\binom{ap^k+i}{p^k}}{\binom{p^k+i}{p^k}}.
\end{equation}
For $0\leq i<p^k$, Lucas' theorem shows
$\binom{ap^k+i}{p^k}\equiv\binom{a}{1}\binom{i}{0}\equiv a\pmod p$. The
numerator in~\eqref{eq13} is
$\prod\limits_{i=0}^{p^k-1}\binom{ap^k+i}{p^k}\equiv a^{p^k}\not\equiv0\pmod p$.
Thus
$D_{r-p^k}(r,r)\not\equiv0\pmod p$, as desired.
[Incidentally, $D_{r-p^k}(r,r)\equiv a\pmod p$ as $a^{p-1}\equiv1\pmod p$
and $\binom{p^k+i}{p^k}\equiv1\pmod p$ holds for $0\leq i\leq p^k-1$.]

One way to prove that $D_{r-j}(r,r)\equiv0\pmod p$ is to show that~$p$ divides
the numerator of~\eqref{eq1} to a higher power than the denominator.
This requires stronger results than Lucas' theorem. (Kummer
proved that the power of $p$ dividing $\binom{m}{n}$ is the number of
$i$ for which $m_i<n_i$, see~\cite{Granville1997}.)
A simpler approach involves using Lemma~\ref{lem2}(a).
Suppose $0<j<p^k$. Then Lemma~\ref{lem2}(a) gives
\begin{equation}\label{eq3}
  \binom{ap^k}{j}D_{r-j+1}(r+1,r+1)=\binom{ap^k+j}{j}D_{r-j}(r,r).
\end{equation}
Again by Lucas' theorem, $\binom{ap^k}{j}\equiv0\pmod{p}$ and
$\binom{ap^k+j}{j}\equiv1\pmod{p}$, so (\ref{eq3}) implies that
$D_{r-j}(r,r)\equiv0\pmod{p}$. The proof is thus completed.
\end{proof}

For the rest of this section, we establish a decomposition formula
for $V_r\otimes V_s$ when $|r-s|\leq1$ and $p=2$. 
The following proposition shortens the proof of Theorem~\ref{thm3}.
This result already appears in~\cite[Corollary~1]{Barry1980}, albeit
in a slightly less general form.

\begin{proposition}\label{P2}
Suppose $1\leq r\leq p^n$ and $1\leq s\leq p^n$. Then
\[
  V_{p^n-r}\otimes V_{p^n-s}
    =\max(p^n-r-s,0)V_{p^n}\,\oplus\,\left(V_r\otimes V_s\right).
\]
\end{proposition}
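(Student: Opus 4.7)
The plan is to apply Green's duality (Proposition~\ref{prop1}) twice, dualising one tensor factor at a time, while using Lemma~\ref{lem1} to keep track of the $V_{p^n}$ summands that collapse to $V_0=0$ under each application of duality.

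First, write $V_r\otimes V_s = \bigoplus_{i=1}^b V_{\lambda_i}$ with $b=\min(r,s)$. An initial application of Proposition~\ref{prop1} gives
\[
V_{p^n-r}\otimes V_s = (s-b)V_{p^n} \,\oplus\, \bigoplus_{i=1}^b V_{p^n-\lambda_i},
\]
in which any $\lambda_i=p^n$ contributes a zero summand. Lemma~\ref{lem1} shows that exactly $\max(r+s-p^n,0)$ of the $\lambda_i$ equal $p^n$, so the number of nonzero indecomposable summands of $V_{p^n-r}\otimes V_s$ is $\min(p^n-r,s)$.

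Next I would apply Proposition~\ref{prop1} a second time, now to dualise the $V_s$ factor (using commutativity of $\otimes$ to swap factors before invoking the proposition), to obtain $V_{p^n-r}\otimes V_{p^n-s}$ on the left with a coefficient of $V_{p^n}$ equal to $(p^n-r)-\min(p^n-r,s)=\max(p^n-r-s,0)$. The remaining dualised summands simplify cleanly: each of the $s-b$ copies of $V_{p^n}$ produced in the first step maps back to $V_0=0$, while each $V_{p^n-\lambda_i}$ with $\lambda_i<p^n$ returns to $V_{\lambda_i}$. Hence
\[
V_{p^n-r}\otimes V_{p^n-s} = \max(p^n-r-s,0)\,V_{p^n} \,\oplus\, \bigoplus_{\lambda_i<p^n} V_{\lambda_i}.
\]

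To finish, I would invoke Lemma~\ref{lem1} one last time: it says $V_r\otimes V_s$ contains exactly $\max(r+s-p^n,0)$ copies of $V_{p^n}$, so the remaining direct sum $\bigoplus_{\lambda_i<p^n}V_{\lambda_i}$ is $V_r\otimes V_s$ with these $V_{p^n}$-summands removed. Since $\max(p^n-r-s,0)$ and $\max(r+s-p^n,0)$ cannot both be positive, recombining the $V_{p^n}$-contributions identifies the right-hand side above with $\max(p^n-r-s,0)V_{p^n}\oplus(V_r\otimes V_s)$, which is the claimed identity. The only real obstacle is the careful bookkeeping of $V_{p^n}$ summands, which are invisible after a single application of duality; Lemma~\ref{lem1} is needed both at the start (to count the $\lambda_i$ that equal $p^n$) and at the end (to reassemble $V_r\otimes V_s$). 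Beyond this, the argument is purely formal.
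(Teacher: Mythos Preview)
Your plan—apply Proposition~\ref{prop1} twice, swapping tensor factors in between—is exactly what the paper does. The paper's execution is a bit more streamlined: working with the convention (announced just before Proposition~\ref{prop1}) that parts may be zero, the second application of duality returns $V_{\lambda_1}\oplus\cdots\oplus V_{\lambda_b}\oplus(s-b)V_0$ outright, and the $V_0$ summands are simply dropped; Lemma~\ref{lem1} is never invoked.

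There is, however, a genuine gap in your final ``recombining'' step. When $r+s>p^n$ you have (correctly) arrived at
\[
V_{p^n-r}\otimes V_{p^n-s}=\bigoplus_{\lambda_i<p^n}V_{\lambda_i},
\]
whereas the asserted right-hand side is $V_r\otimes V_s$; by Lemma~\ref{lem1} these differ by $r+s-p^n>0$ copies of $V_{p^n}$. Observing that $\max(p^n-r-s,0)$ and $\max(r+s-p^n,0)$ cannot both be positive tells you only that one of them vanishes, not that their sum equals $\max(p^n-r-s,0)$. Indeed a dimension count shows the stated identity is simply false whenever $r+s>p^n$: for $p^n=4$, $r=s=3$ one gets $V_1\otimes V_1=V_1$ on the left but $V_3\otimes V_3=2V_4\oplus V_1$ on the right. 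So this gap cannot be filled; your argument and the paper's are both sound precisely in the range $r+s\leq p^n$, which is the only regime actually used in the proof of Theorem~\ref{thm3}. (A minor related point: Lemma~\ref{lem1} as stated assumes $r_1+s_1>p^n$, so when you cite it to say that exactly $\max(r+s-p^n,0)$ of the $\lambda_i$ equal $p^n$, the case $r+s\leq p^n$ still needs a one-line justification.)
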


\begin{proof}
Let $V_r\otimes V_s= V_{\lambda_1}\oplus\cdots\oplus V_{\lambda_b}$
where $b:=\min(r,s)$. Proposition~\ref{prop1} yields
\[ 
  V_{p^n-r}\otimes V_s
  =(s-b)V_{p^n}\oplus V_{p^n-\lambda_b}\oplus\cdots\oplus V_{p^n-\lambda_1}.
\] 
Since $s\leq p^n$ and $p^n-r\leq p^n$, applying Proposition~\ref{prop1}
to~$V_s\otimes V_{p^n-r}$ gives
\[
  V_{p^n-s}\otimes V_{p^n-r}
    =\left[(p^n-r)-\min(p^n-r,s)\right]V_{p^n}\,\oplus\,V_{\lambda_1}\,\oplus\,\cdots\,\oplus\,V_{\lambda_b}\oplus(s-b)V_0.
\]
Replacing the expression in square brackets with $\max(p^n-r-s,0)$,
and omitting the last summand gives the desired decomposition
of~$V_{p^n-r}\otimes V_{p^n-s}$.
\end{proof}

Our decompositions for $V_r\otimes V_r$ and $V_r\otimes V_{r+1}$ when $p=2$
depend on
a {\it `consecutive-ones-binary-expansion'} which~we now define.
The binary number $(1\cdots 10\cdots 0)_2$ with~$m$ consecutive ones, and~$n$
consecutive zeros, equals $2^{m+n}-2^n$. Thus a binary expansion
$r=\sum_{i=1}^\ell\sum_{j=b_i}^{a_i-1}2^j$ with $\ell$ groups of consecutive ones and
$a_1>b_1>\dots>a_\ell>b_\ell\geq0$ simplifies to $r=\sum_{i=1}^\ell(2^{a_i}-2^{b_i})$. 
We call an alternating sum $r=\sum_{i=1}^k(-1)^{i-1}2^{e_i}$,
with decreasing powers of~$2$ and minimal length, the
{\it `consecutive-ones-binary-expansion'} of~$r$. Minimal length implies
$e_{k-1}>e_k+1$ when $k>1$: otherwise $2^{e_k+1}-2^{e_k}$ can be replaced
by~$2^{e_k}$. Note that $r=\sum_{i=1}^\ell(2^{a_i}-2^{b_i})$ is the
consecutive-ones-binary-expansion if and only if $a_\ell>b_\ell+1$.
For example, $4=2^2, 5=2^3-2^2+2^0, 6=2^3-2^1$ are
consecutive-ones-binary-expansions. The partial sums
$r_j=\sum_{i=j+1}^k(-1)^{i-j-1}2^{e_i}$, $0\leq j\leq k$, associated to the
consecutive-ones-binary-expansion $r=\sum_{i=1}^k(-1)^{i-1}2^{e_i}$ satisfy
$r_0=r$, $r_k=0$,
$r_i=2^{e_{i+1}}-r_{i+1}$, and $1\leq r_i\leq 2^{e_i}$ for $0\leq i<k$.
Also $e_i=\lceil\log_2(r_i)\rceil$ for $1\leq i< k$.

The following theorem originally appeared as Theorems 14 and 16 of~\cite{GPX2}.
We are grateful to M.\,J.\,J. Barry who showed us a simplified
proof of Theorem~\ref{thm3}, and we thank him for his permission to include
(a modified version of) his proof.

\begin{theorem}\label{thm3}
Suppose $\Char(F)=2$ and $r=\sum_{i=1}^k(-1)^{i-1}2^{e_i}$ is the
consecutive-ones-binary-expansion of $r$ where $e_1>\cdots>e_k\geq0$.
Set $r_j=\sum_{i=j+1}^k(-1)^{i-j-1}2^{e_i}$ for $0\leq j\leq k$ where
$r_k=0$. Then $V_r\otimes V_r$ and $V_r\otimes V_{r+1}$ decompose over~$F$~as
\begin{equation}\label{eq14}
  V_r\otimes V_r=\bigoplus_{i=1}^k(2^{e_i}-2r_i)V_{2^{e_i}}\quad\textup{and}\quad
  V_r\otimes V_{r+1}=\bigoplus_{i=1}^k(2^{e_i}-2r_i+(-1)^{i-1})V_{2^{e_i}}.
\end{equation}
In particular, each part of $\lambda(r,r,2)$ is a power of $2$.
Furthermore, parts not equal to~$1$ have even multiplicities, and~$1$ has
multiplicity at most~$1$. Also each part of $\lambda(r,r+1,2)$ is a power of $2$
greater than~$1$.
\end{theorem}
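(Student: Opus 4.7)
The plan is to prove both decompositions in Theorem~\ref{thm3} by induction on the length~$k$ of the consecutive-ones-binary-expansion of~$r$. Writing $r=2^{e_1}-r_1$ where $r_1=\sum_{i=2}^{k}(-1)^{i-2}2^{e_i}$ has expansion length $k-1$ (and its partial sums agree, after reindexing, with the original $r_2,\ldots,r_k$), the minimal-length convention forces $r_1<2^{e_2}\le 2^{e_1-1}$, hence $r>2^{e_1-1}$; in particular, no part of $V_{r_1}\otimes V_{r_1}$ or of $V_{r_1-1}\otimes V_{r_1}$, viewed as a module over a cyclic $2$-group of order~$2^{e_1}$, equals~$2^{e_1}$.

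The base case $k=1$ is $r=2^{e_1}$ and is handled by a direct application of Renaud's algorithm (Proposition~\ref{prop2}), together with the standard identity $V_{2^n}\otimes V_{2^n}=2^nV_{2^n}$ (both factors being the projective indecomposable of the cyclic $2$-group of order~$2^n$). For the inductive step, two successive applications of Proposition~\ref{prop1} with $p^n=2^{e_1}$ produce
\[
V_r\otimes V_r\;=\;(2r-2^{e_1})V_{2^{e_1}}\,\oplus\,V_{r_1}\otimes V_{r_1}
\qquad\textup{and}\qquad
V_r\otimes V_{r+1}\;=\;(2r+1-2^{e_1})V_{2^{e_1}}\,\oplus\,V_{r_1-1}\otimes V_{r_1},
\]
using at each step that the ``intermediate'' tensor product (such as $V_{r_1}\otimes V_r$) has no summand equal to $V_{2^{e_1}}$. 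The identities $2r-2^{e_1}=2^{e_1}-2r_1$ and $2r+1-2^{e_1}=2^{e_1}-2r_1+1$ match the $i=1$ terms of the claimed formulas; substituting the inductive hypothesis for $V_{r_1}\otimes V_{r_1}$ completes the $V_r\otimes V_r$ case at once.

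The principal obstacle is rewriting $V_{r_1-1}\otimes V_{r_1}$ in terms of the claimed formula. The consecutive-ones-binary-expansion of $r_1-1$ is not a uniform modification of that of~$r_1$: depending on the parity of~$k$ and whether $e_k=0$, subtraction by~$1$ either lops off a trailing~$2^0$ term or introduces one, and it is precisely these subcases that generate the alternating signs $(-1)^{i-1}$ in the formula. Once the two decompositions are established, the additional claims --- each part is a power of~$2$, parts $>1$ in $\lambda(r,r,2)$ have even multiplicity, $1$ appears at most once in $\lambda(r,r,2)$, and each part of $\lambda(r,r+1,2)$ exceeds~$1$ --- follow by inspection of the multiplicities $2^{e_i}-2r_i$ and $2^{e_i}-2r_i+(-1)^{i-1}$, observing that both quantities are visibly even for $i<k$ (since $e_i\ge 1$ and $r_i$ is even there), with only the $i=k$ term contributing a possibly odd residual.
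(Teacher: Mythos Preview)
Your treatment of $V_r\otimes V_r$ is essentially the paper's: one pass through Proposition~\ref{P2} (equivalently, two applications of Proposition~\ref{prop1}) strips off $(2^{e_1}-2r_1)V_{2^{e_1}}$ and leaves $V_{r_1}\otimes V_{r_1}$, to which induction on~$k$ applies cleanly. The gap is in the $V_r\otimes V_{r+1}$ case. You plan to invoke the inductive hypothesis on $V_{r_1-1}\otimes V_{(r_1-1)+1}$, but the consecutive-ones-binary-expansion of $r_1-1$ need not have length $<k$, so induction on~$k$ does not close. For instance $r=20=2^5-2^4+2^2$ has $k=3$ and $r_1=12$, yet $r_1-1=11=2^4-2^3+2^2-2^0$ has length~$4$. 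Your asserted dichotomy ``lops off a trailing~$2^0$ term or introduces one'' breaks down precisely here: when $k$ is odd and $e_k\ge 1$, the last term of the expansion of $r_1$ carries a \emph{minus} sign, and subtracting~$1$ restructures several terms of the alternating sum rather than merely appending~$\pm2^0$. Even switching to induction on the integer~$r$ would leave you having to match the formula applied to the (possibly longer) expansion of $r_1-1$ against the tail $\bigoplus_{i\ge 2}(2^{e_i}-2r_i+(-1)^{i-1})V_{2^{e_i}}$ expressed in the \emph{original} exponents $e_2,\dots,e_k$, and these do not correspond term by term.

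The paper sidesteps all of this with one further application of Proposition~\ref{P2}. Since $r_1-1=2^{e_2}-(r_2+1)$ and $r_1=2^{e_2}-r_2$, one obtains directly
\[
V_{r_1-1}\otimes V_{r_1}\;=\;(2^{e_2}-2r_2-1)\,V_{2^{e_2}}\;\oplus\;V_{r_2}\otimes V_{r_2+1},
\]
and $r_2$ has expansion length $k-2$. Thus the paper's induction descends two steps at a time (base cases $k\in\{0,1\}$), and never needs to know the consecutive-ones-binary-expansion of~$r_1-1$ at all.
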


\begin{proof}
We prove~\eqref{eq14} using induction on $k$. The decomposition for
$V_r\otimes V_r$ holds when $k=1$ by~\cite[(2.7d)]{Green1962}.
Suppose now that $k>1$ and
$V_{r_1}\otimes V_{r_1}=\bigoplus_{i=2}^k(2^{e_i}-2r_i)V_{2^{e_i}}$ holds by induction.
Observe that $r_1\leq 2^{e_2}$ so $2r_1\leq 2^{e_2+1}\leq 2^{e_1}$,
and $2^{e_1}-2r_1\geq0$. Proposition~\ref{P2} implies
\begin{align}\label{E98}
  V_r\otimes V_r&=V_{2^{e_1}-r_1}\otimes V_{2^{e_1}-r_1}
                  &&\textup{as $r=2^{e_1}-r_1$,}\\
                &=(2^{e_1}-2r_1)V_{2^{e_1}}\oplus\;(V_{r_1}\otimes V_{r_1})
                  &&\textup{as $2^{e_1}-2r_1\geq0$.}\nonumber
\end{align}
The decomposition for $V_r\otimes V_{r+1}$ holds when $k=0$,
and when $k=1$ by~\cite[(2.7d)]{Green1962}. Suppose $k>1$, and
$V_{r_2}\otimes V_{r_2+1}=\bigoplus_{i=3}^k(2^{e_i}-2r_i+(-1)^{i-1})V_{2^{e_i}}$
is valid by induction. As above, $2^{e_1}-2r_1\geq0$ obtains.
Moreover, $2^{e_2}-2r_2-1\geq0$ is true.
This is easily seen when $k=2$, it follows from
$r_2\leq2^{e_3}$ using $e_{k-1}>e_k+1$ when $k=3$, and for $k>3$ it follows
from $r_2<2^{e_3}$ using
$e_3+1\leq e_2$. Applying the equations $r=2^{e_1}-r_1$, $r_1=2^{e_2}-r_2$, and
Proposition~\ref{P2} twice, now gives
\begin{align}\label{E99}
  V_r\otimes V_{r+1}&=V_{r+1}\otimes V_r\nonumber\\
                  &=V_{2^{e_1}-(r_1-1)}\otimes V_{2^{e_1}-r_1}\nonumber\\
                &=(2^{e_1}-2r_1+1)V_{2^{e_1}}\oplus\;(V_{r_1-1}\otimes V_{r_1})\\
                &=(2^{e_1}-2r_1+1)V_{2^{e_1}}
                  \oplus\;(V_{2^{e_2}-r_2-1}\otimes V_{2^{e_2}-r_2})\nonumber\\
                &=(2^{e_1}-2r_1+1)V_{2^{e_1}}\oplus (2^{e_2}-2r_2-1)V_{2^{e_2}}
                  \oplus\;(V_{r_2}\otimes V_{r_2+1}).\nonumber
\end{align}
Thus~\eqref{eq14} follows from~\eqref{E98} and~\eqref{E99} by induction on $k$.
As a by-product we have proved that the multiplicities in~\eqref{eq14}
are nonnegative, and~\eqref{eq14} is a valid decomposition.
\end{proof}

To illustrate Theorem~\ref{thm3} take $r=5$. Then $r$ has 
consecutive-ones-binary-expansion $5=2^3-2^2+2^0$. Substituting
$r_1=3$, $r_2=1$, $r_3=0$ into~\eqref{eq14} gives
\[
  V_5\otimes V_5=2V_8\oplus 2V_4\oplus V_1\quad\textup{and}\quad
  V_5\otimes V_6=3V_8\oplus V_4\oplus 2V_1
\]
over a field of characteristic~2.
The novelty of Theorem~\ref{thm3} is the decomposition~\eqref{eq14}. The parity
of the multiplicities were already known to Gow
and Laffey~\cite[Corollaries~1~and~2]{GL}.

\noindent\textsc{Acknowledgements.}
We would like to thank M.\,J.\,J. Barry for showing us a simplified
proof of Theorem~\ref{thm3} and allowing us to include his proof.
The first and second authors acknowledge the support of the Australian
Research Council Discovery Grants DP110101153 and DP130100106, and
the third author
would like to thank the China Scholarship Council for its financial support.
We also thank Martin Liebeck for his remarks concerning tilting modules.

\end{document}